\pgfplotsset{compat=1.18}
\def    \C      {{\mathbb C}}
\def    \R      {{\mathbb R}}
\def \Z {{\mathbb Z}}
\renewcommand{\epsilon}{\varepsilon}
\newtheorem{theorem}{Theorem}[section]
\newtheorem{lemma}[theorem]{Lemma}
\newtheorem{prop}[theorem]{Proposition}
\newtheorem{defi}[theorem]{Definition}
\theoremstyle{remark}
\newtheorem{remark}{Remark}
\title{Max-min energy of pseudoholomorphic curves and periodic Reeb flows in dimension $3$}
\date{}
\author{Rafael Fernandes and Brayan Ferreira}
\begin{document}

\thanks{The first author was partially supported by the NSF grant DMS-2304206}

\maketitle

\begin{abstract}
In this paper, we make use of elementary spectral invariants given by the max-min energy of pseudoholomorphic curves, recently defined by Michael Hutchings, to study periodic $3$-dimensional Reeb flows. We prove that Zoll contact forms on $S^3$ are characterized by $c_1 = c_2 = \mathcal{A}_{\min}$. This follows from the spectral gap closing bound property and a computation of ECH spectral invariants for Zoll contact forms defined on Lens spaces $L(p,1)$ for $p\geq 1$. The former characterization fails for Lens spaces $L(p,1)$ with $p>1$. Nevertheless, we characterize Zoll contact forms on $L(p,1)$ in terms of ECH spectral invariants. Lastly, we note a characterization of Besse contact forms also holds for elementary spectral invariants analogously to the one obtained by Dan Cristofaro-Gardiner and Mazzucchelli.
\end{abstract}

\tableofcontents

\section{Introduction}

In this paper, we show how elementary spectral invariants can be used to characterize when a contact form is Besse, and when a contact form on the sphere $S^3$ is Zoll. Let $Y$ be a closed $3$-manifold. A $1$-form $\lambda$ on $Y$ is a \textit{contact form} if $\lambda\wedge d\lambda$ is nowhere vanish. For every contact form $\lambda$, we associate the \textit{contact structure} $\xi = ker(\lambda)$, and the \textit{Reeb vector field} $R$, uniquely defined by $d\lambda(R,\cdot) = 0$ and $\lambda(R)=1$. A \textit{Reeb orbit} is a periodic orbit of the Reeb vector field modulo reparametrization of the domain, i.e., is a map $\gamma: \mathbb{R}/ T\mathbb{Z} \rightarrow Y$ such that $\dot{\gamma}(t) = R(\gamma(y))$, and $\gamma$ and $\eta$ Reeb orbits are the same if $\gamma(t) = \eta(t + \theta)$, for some $\theta \in \mathbb{R}$. A Reeb orbit $\gamma$ is called \textit{simple} if $\gamma$ is an embedding, otherwise, $\gamma$ is an $d$-fold of a simple orbit for some integer $d >1$. It was conjectured by Weinstein that any closed contact manifold admits a Reeb orbit. This conjecture is now a theorem in dimension $3$, proved by Taubes in \cite{taubes2007seiberg}. In light of this result, we consider the minimal period (also called the action) of a Reeb orbit on a closed contact $3$-manifold, denoted by $\mathcal{A}_{min}(Y,\lambda)$ or $\mathcal{A}_{min}(\lambda)$.

A contact form is called \textit{Besse} if all trajectories of the Reeb flow are closed Reeb orbits. In this case, all Reeb orbits admit a common period, by a theorem of Wadsley~\cite{wadsley1975geodesic}. When the contact form is Besse and all Reeb orbits share a common minimal period, the contact form is called \textit{Zoll}. There are several works proving that Zoll contact forms are, modulo strictly contactomorphisms, uniquely given by the standard models up to scaling; see \cite{abbondandolo2017systolic,abbondandolo2018sharp,benedetti2021local,chaidez2025zoll}. In~\cite{cristofaro2020action}, Cristofaro-Gardiner and Mazzucchelli proved that a Besse contact form is determined, up to strict contactomorphisms, by its \emph{prime action spectrum}, i.e., the set of periods of simple Reeb orbits. Moreover, they gave a characterization of Besse closed contact $3$-manifolds in terms of the \textit{embedded contact homology} (ECH) spectrum. In particular, they showed that the Besse condition is equivalent to the existence of some $\sigma \in \text{ECH}(Y)$ such that $U\sigma \neq 0$ and $c^{ECH}_{\sigma}(Y,\lambda) = c^{ECH}_{U\sigma}(Y,\lambda)$. 
In the higher-dimensional setting of this family of questions, Ginzburg, Gürel, and Mazzucchelli proved that Besse and Zoll convex contact spheres in $\mathbb{R}^{2n}$ can be characterized using a sequence of spectral invariants of the Clarke action functional \cite{ginzburg2021spectral}. In the realm of differential geometry, many results have been established in the quest to understand the Zoll condition of the geodesic flow on a sphere; see e.g. \cite{mazzucchelli2018characterization} and the references therein. More recently, Ambrozio, Marques and Neves defined a higher-dimensional notion of Zoll Riemannian metrics in the sense of minimal hypersurfaces \cite{ambrozio2025riemannian} and also gave a characterization of such Zoll metrics on $S^3$ in terms of spherical area widths \cite{ambrozio2024rigidity}.

In what follows, we make use of \textit{elementary spectral invariants}, defined by Hutchings in~\cite{hutchings2022elementary}, to characterize again Besse contact $3$-manifolds (similarly to the characterization by Cristofaro-Gardiner and Mazzucchelli), as well as Zoll contact $3$-spheres. In particular, we characterize Zoll star-shaped hypersurfaces in $\mathbb{C}^2$.

\subsection{Main results} Let $(Y,\lambda)$ a contact closed $3$-manifold. For each $k\geq 0$, the contact invariant $c_k(Y,\lambda)$ is defined as the supremum over $R>0$ of the max-min of upper energy of suitable pseudoholomorphic curves on the symplectic completion of $[-R,0]\times Y$, passing through $k$ distinct marked points.

Given a star-shaped hypersurface $\Sigma \subset \C^2$, the standard Liouville form $\lambda_0 = \frac{1}{2}\left(\sum_{i=1}^2 x_i dy_i - y_i dx_i\right)$ restricts to a contact form on $\Sigma$. We say that $\Sigma$ is a \emph{Zoll hypersurface} if $\lambda_0$ restricts to a Zoll contact form on $\Sigma$. Now we are ready to state our first result:

%\begin{theorem}[Hutchings, \cite{hutchings2022elementary}]\label{c1=c2}
%Let $(Y,\lambda)$ be any closed contact three-manifold. If $c_k(Y,\lambda) = c_{k-1}(Y,\lambda)\leq L$, then every point in $Y$ is contained in a Reeb orbit of action at most $L$. 
%\end{theorem}

%\subsection{Main results}%

%\subsubsection{Zoll Contact Forms}
%Our first result is the following.

\begin{theorem}\label{thm:zoll iff c1=c2}
A contact form $\lambda$ on $S^3$ is Zoll if and only if $c_1(S^3,\lambda) = c_2(S^3,\lambda) = \mathcal{A}_{\min}(S^3,\lambda)$. In particular, a star-shaped hypersurface $\Sigma \subset \C^2$  is a Zoll hypersurface if and only if $c_1(\Sigma,\lambda_0) = c_2(\Sigma,\lambda_0) = \mathcal{A}_{\min}(\Sigma,\lambda_0)$.
\end{theorem}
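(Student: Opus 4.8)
The plan is to prove the two implications of the $S^3$ statement separately, and to deduce the star-shaped assertion as an immediate instance of it: since every star-shaped $\Sigma\subset\C^2$ is diffeomorphic to $S^3$ and $\lambda_0$ restricts to a contact form on it, the claim about $c_1(\Sigma,\lambda_0)=c_2(\Sigma,\lambda_0)=\mathcal{A}_{\min}(\Sigma,\lambda_0)$ is just the $S^3$ statement applied to $\lambda=\lambda_0|_\Sigma$, with $\Sigma$ being Zoll by definition exactly when this $\lambda$ is Zoll.

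For the forward implication I would start from the definition of Zoll on $S^3$: the Reeb flow is periodic and every orbit shares the common minimal period $T=\mathcal{A}_{\min}(S^3,\lambda)$. Identifying $S^3=L(1,1)$, I would invoke the computation of ECH spectral invariants for Zoll contact forms on $L(p,1)$ announced in the introduction, in the case $p=1$. For the Hopf-type Zoll flow the full action spectrum is the multiset $\{0,T,T,2T,2T,2T,\dots\}$, in which $mT$ occurs with multiplicity $m+1$; arranging these in nondecreasing order yields $c_0=0$ and $c_1=c_2=T$. Since $T=\mathcal{A}_{\min}$, this gives $c_1=c_2=\mathcal{A}_{\min}$, as required.

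For the converse, suppose $c_1(S^3,\lambda)=c_2(S^3,\lambda)=\mathcal{A}_{\min}(S^3,\lambda)=:T$. The equality $c_1=T$ records that the one-point invariant detects a shortest Reeb orbit of action $T$. The decisive hypothesis is $c_2=c_1$, which I would feed into the spectral gap closing bound. Recall that $c_2$ measures the max-min energy of curves constrained to pass through two distinct marked points; the content of the gap closing bound is that a coincidence $c_{k+1}=c_k$ cannot occur unless the Reeb dynamics degenerate along the relevant action level. Concretely, when $c_2=c_1=T$ one should, for a generic pair of marked points $p,q\in S^3$, realize the two-point constraint by a holomorphic building of total energy $T$ whose components are cylinders over Reeb orbits of action $T$. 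Letting $p$ and $q$ range over $S^3$ then forces a Reeb orbit of action exactly $T=\mathcal{A}_{\min}$ through every point, whence every Reeb orbit is closed with minimal period $T$, i.e.\ $\lambda$ is Zoll.

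The step I expect to be the main obstacle is the last part of the converse: upgrading the spectral equality $c_2=c_1$ to the geometric statement that a continuum of minimal-action orbits sweeps out all of $S^3$. This requires the precise orbit-detection properties of the elementary invariants together with the spectral gap closing bound, and care is needed because the Zoll condition is maximally degenerate, so the nondegeneracy typically used to define $c_k$ is unavailable. I anticipate handling this by a limiting argument: approximate $\lambda$ by nondegenerate contact forms, apply the gap closing bound to the approximations, and pass to the limit to recover the family of period-$T$ orbits covering $S^3$.
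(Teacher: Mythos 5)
Your converse direction (that $c_1=c_2=\mathcal{A}_{\min}$ forces Zoll) is essentially the paper's argument: apply the Spectral Gap Closing Bound with $k=2$ and $L=\mathcal{A}_{\min}$ to get $\mathrm{Close}^{L}(S^3,\lambda)\leq c_2-c_1=0$, and then a limiting (Arzel\`a--Ascoli) argument --- Lemma \ref{closel=0} --- upgrades $\mathrm{Close}^{L}=0$ to a closed Reeb orbit of action at most $L$ through every point, whence Zoll. Your description of the mechanism via buildings of cylinders through two marked points is not the actual content of the bound (which is phrased via positive deformations and Gromov width), but the overall logic, including the approximation step you anticipate, is exactly what the paper does, and this direction indeed holds on any closed contact $3$-manifold.

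The forward direction, however, has a genuine gap. You list the ECH spectral values of a Zoll form ($mT$ with multiplicity $m+1$) and then assert that ``arranging these in nondecreasing order yields $c_1=c_2=T$.'' But the elementary invariants $c_k$ are neither defined as, nor known to equal, the sorted ECH spectrum; the entire difficulty of this direction is the upper bound $c_2\leq \mathcal{A}_{\min}$. The paper obtains it from two ingredients you never invoke: (i) the Liouville Domains property $c_k(S^3,\lambda)\leq c_k^{ECH}(S^3,\lambda)$ (item (4) of Theorem \ref{ckproperties}, available because a Zoll $S^3$ bounds the associated disk bundle), and (ii) the $U$-tower structure $U\zeta_k=\zeta_{k-1}$, $\zeta_0=[\emptyset]$, of $ECH(S^3,\xi,0)$ together with the computation $c^{ECH}_{\zeta_2}(S^3,\lambda)=\mathcal{A}_{\min}(\lambda)$ from Lemma \ref{echspeclp1}, which give $c_2^{ECH}(S^3,\lambda)\leq \mathcal{A}_{\min}(\lambda)$; the matching lower bound $c_1,c_2\geq \mathcal{A}_{\min}$ then comes from Spectrality. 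Your ``sort the spectrum'' step is not merely unjustified but provably unreliable: for the Zoll form on $L(p,1)$ with $p>1$, the multiset of ECH spectral values is the same $\{0,T,T,2T,\dots\}$ (see the Remark following Lemma \ref{echspeclp1}), yet Theorem \ref{c2lp1} shows $c_2=2\mathcal{A}_{\min}$ there. So the equality $c_2=\mathcal{A}_{\min}$ on $S^3$ depends on the graded $U$-module structure of ECH of $S^3$, not on the action spectrum alone, and your proposal is missing precisely this input. A smaller point: phrase the argument so it applies to an arbitrary Zoll form rather than only the ``Hopf-type'' one; the paper's Lemma \ref{echspeclp1} achieves this via Bourgeois' perturbation, deliberately avoiding the uniqueness theorem of Abbondandolo--Bramham--Hryniewicz--Salom\~ao.
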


We note that one can drop the $\mathcal{A}_{min}(\lambda)$ condition in the Theorem above; see Remark \ref{amincond}.

In \cite{abbondandolo2018sharp}, Abbondandolo, Bramham, Hryniewicz and Salomão proved that for any Zoll contact form $\lambda$ defined in $S^3$ there exists a strictly contactomorphism between $(S^3,C\lambda_0)$ and $(S^3,\lambda)$ for some constant $C >0$, i.e., a diffeomorphism $f\colon S^3 \to S^3$ such that $f^*\lambda = C\lambda_0$. We do not use this fact in the proof of Theorem \ref{thm:zoll iff c1=c2}. Moreover, combining this fact with Theorem \ref{thm:zoll iff c1=c2}, one concludes that there exists such a strictly contactomorphism if and only if $c_1(S^3,\lambda) = c_2(S^3,\lambda) = \mathcal{A}_{\min}(S^3,\lambda)$.

The ``only if'' part of Theorem of \ref{thm:zoll iff c1=c2} follows from the Spectral Gap Closing Bound property of $c_k$, see \eqref{specgapprop}, and Lemma \ref{closel=0} below. The latter was suggested in \cite[Remark 1.4]{hutchings2022elementary}. This part holds for any closed contact $3$-manifold. The ``if'' direction relies on a computation of some ECH spectral invariants for Zoll contact forms on Lens spaces. This computation has its own importance, so we highlight it in our next theorem.

Let $p,q$ be two positive coprime integers. We recall that the Lens space $L(p,q)$ is the $3$-dimensional smooth manifold defined as the quotient space of the $\Z_p$ action
$$(k \mod p) \cdot (z_1,z_2) = (e^{2k\pi i /p}z_1, e^{2k\pi i q/p}z_2)$$
defined on the unit sphere $S^3 \subset \C^2$. Since the standard Liouville form $\lambda_0$ is invariant through this action, $\lambda_0|_{S^3}$ descends to a contact form on $L(p,q)$. Moreover, when $q=1$, the latter contact form is Zoll. We have the following characterization of Zoll contact forms in $L(p,1)$.

\begin{theorem}\label{echlens}
Let $\lambda$ be a contact form on a Lens space $L(p,1)$. Then, $\lambda$ is a Zoll contact form if and only if there exists $\sigma \in ECH(L(p,1),\ker \lambda)$ such that $c^{ECH}_\sigma(L(p,1),\lambda) = c^{ECH}_{U\sigma}(L(p,1),\lambda) = \mathcal{A}_{min}(L(p,1),\lambda)$.
\end{theorem}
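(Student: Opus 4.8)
The plan is to prove Theorem \ref{echlens} by adapting the ECH characterization of Besse contact forms from Cristofaro-Gardiner and Mazzucchelli to the specific geometry of Lens spaces $L(p,1)$, using the fact that on $L(p,1)$ the Zoll condition (all orbits sharing a common \emph{minimal} period) is strictly stronger than the Besse condition (orbits merely closed with a common period, but possibly of differing minimal periods). The key additional ingredient over the general Besse picture is the extra equality with $\mathcal{A}_{\min}$, which is precisely what upgrades ``Besse'' to ``Zoll'' in this setting.

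For the ``only if'' direction, I would start from the known computation of the ECH spectrum of the standard Zoll contact form on $L(p,1)$. By the classification result of Abbondandolo--Bramham--Hryniewicz--Salomão (whose analogue on $L(p,1)$ descends from the $S^3$ statement via the $\Z_p$-quotient), any Zoll $\lambda$ on $L(p,1)$ is strictly contactomorphic to a scaling $C\lambda_0$ of the standard form. Since ECH spectral invariants are invariant under strict contactomorphisms and scale linearly, it suffices to compute $c^{ECH}_\sigma$ and $c^{ECH}_{U\sigma}$ for the standard Zoll form and exhibit a class $\sigma$ realizing the triple equality. First I would identify the ECH generators: for the standard $L(p,1)$ form all simple Reeb orbits are elliptic with the same minimal action $\mathcal{A}_{\min}$ (this is the Zoll property), so the action filtration is a discrete lattice with spacings governed by $\mathcal{A}_{\min}$. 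Then I would locate a class $\sigma$ with $U\sigma\neq 0$ sitting at the bottom of the spectrum and check that $c^{ECH}_\sigma = c^{ECH}_{U\sigma} = \mathcal{A}_{\min}$; the equality $c^{ECH}_\sigma = c^{ECH}_{U\sigma}$ reflects the fact that the $U$-map drops grading by $2$ but does not increase action, while the equality to $\mathcal{A}_{\min}$ pins down that the relevant generator is a single copy of a simple orbit of minimal action.

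For the ``if'' direction, I would run the Cristofaro-Gardiner--Mazzucchelli machinery in reverse. The hypothesis $c^{ECH}_\sigma = c^{ECH}_{U\sigma}$ with $U\sigma\neq 0$ already forces $\lambda$ to be Besse by their theorem (quoted in the introduction). The remaining work is to show that the extra condition $c^{ECH}_\sigma = \mathcal{A}_{\min}$ rules out the non-Zoll Besse forms on $L(p,1)$, i.e.\ those Besse forms whose simple orbits have differing minimal periods. Here I would argue that if $\lambda$ were Besse but not Zoll, then the simple Reeb orbits would have at least two distinct minimal periods, and the generator computing $c^{ECH}_\sigma$ at the common period would necessarily be built from orbits of action strictly exceeding $\mathcal{A}_{\min}$ (since the common period is a multiple of each minimal period, and some minimal period strictly exceeds $\mathcal{A}_{\min}$), contradicting $c^{ECH}_\sigma = \mathcal{A}_{\min}$. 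Making this quantitative requires controlling which ECH generator realizes the spectral invariant at the bottom of the spectrum.

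The main obstacle I anticipate is the ``if'' direction's quantitative step: translating the abstract equality $c^{ECH}_\sigma = \mathcal{A}_{\min}$ into a rigid statement about the minimal periods of \emph{all} simple orbits, rather than just the one realizing $\mathcal{A}_{\min}$. On $L(p,1)$ the topology of the manifold (its fundamental group $\Z_p$ and the resulting grading shifts in ECH) enters nontrivially, and one must verify that the Besse-but-not-Zoll configurations are genuinely obstructed by the action equality rather than merely by grading. I expect the cleanest route is to combine the period spectrum description of Besse forms on $L(p,1)$ (all orbits closed with common period, decomposing the manifold into a Seifert fibration) with a direct comparison of $\mathcal{A}_{\min}$ against the minimal ECH action; the rigidity should then follow because equality in the spectral gap estimate forces the minimal-action generator to be a simple orbit, and the Besse structure forces all orbits to then share that minimal period.
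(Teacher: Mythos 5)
Your proposal has genuine gaps in both directions, and in each case the gap is precisely where the paper's actual mechanism does the work. For the ``if'' direction, your route (invoke Cristofaro-Gardiner--Mazzucchelli to conclude Besse, then upgrade to Zoll using $c^{ECH}_\sigma=\mathcal{A}_{\min}$) leaves the upgrade step unproven, and the sketch you give does not close it: by spectrality, the hypothesis $c^{ECH}_\sigma=\mathcal{A}_{\min}$ is realized by an orbit set consisting of a single simple orbit of minimal action, and nothing about action values alone forbids this on a Besse-but-not-Zoll form. Your phrase ``the generator computing $c^{ECH}_\sigma$ at the common period'' conflates the common period $T$ with $\mathcal{A}_{\min}$; the hypothesis concerns $\mathcal{A}_{\min}$, not $T$, so no contradiction arises as stated. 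Completing your argument would require computing ECH spectral invariants of arbitrary Besse forms on $L(p,1)$ (or invoking the Mazzucchelli--Radeschi classification, which the paper uses only in a remark). The paper instead argues directly and never passes through Besse: the equality $c^{ECH}_\sigma=c^{ECH}_{U\sigma}$ with $c^{ECH}_\sigma\leq\mathcal{A}_{\min}$, combined with Hutchings' gap closing bound (Theorem \ref{closelech}), gives $\mathrm{Close}^{\mathcal{A}_{\min}}(L(p,1),\lambda)=0$; Lemma \ref{closel=0} then says every point lies on a Reeb orbit of action at most $\mathcal{A}_{\min}$, hence of action exactly $\mathcal{A}_{\min}$, which is the Zoll condition on the nose (and shows this direction holds on any closed contact $3$-manifold).

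For the ``only if'' direction, your reduction to the standard model rests on the claim that the Abbondandolo--Bramham--Hryniewicz--Salom\~ao uniqueness theorem on $S^3$ ``descends via the $\Z_p$-quotient.'' This is not automatic: lifting $\lambda$ does give a Zoll form $\tilde\lambda$ on $S^3$, and ABHS gives a diffeomorphism $\Phi$ with $\Phi^*(C\lambda_0)=\tilde\lambda$, but $\Phi$ only conjugates the deck group into the group of strict contactomorphisms of $(S^3,C\lambda_0)$, which is infinite dimensional (an extension built from symplectomorphisms of the base $S^2$), not into $U(2)$; identifying the resulting free $\Z_p$-quotient with the standard $L(p,1)$ model is an additional, nontrivial step you do not supply. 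Moreover, even granting the classification, you would still need to compute $c^{ECH}_\sigma$ and $c^{ECH}_{U\sigma}$ for the standard Zoll form, which is \emph{degenerate} (Morse--Bott), so one cannot simply ``locate generators'' in its ECH; that computation requires exactly the machinery the paper deploys for an \emph{arbitrary} Zoll form on $L(p,1)$: the Boothby--Wang prequantization structure, the Bourgeois perturbation $\lambda_\varepsilon=(1+\varepsilon\mathfrak{q}^*f)\lambda$ (Lemma \ref{critpts}), vanishing of the filtered differential (Proposition \ref{filtedredech}), and the Morgan--Weiler direct limit (Theorem \ref{directlimit}), culminating in Lemma \ref{echspeclp1}, which exhibits $\sigma=\zeta_1$ with $U\zeta_1=\zeta_0$ and $c^{ECH}_{\zeta_1}=c^{ECH}_{\zeta_0}=\mathcal{A}_{\min}$. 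The paper explicitly emphasizes that it does not use the ABHS theorem, so your detour through uniqueness both adds an unjustified step and saves no work.
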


%Given a star-shaped hypersurface $\Sigma \subset \C^2$, the standard Liouville form $\lambda_0 = \frac{1}{2}\left(\sum_{i=1}^2 x_i dy_i - y_i dx_i\right)$ restricts to a contact form on $\Sigma$. We say that $\Sigma$ is a Zoll hypersurface if $\lambda_0$ restricts to a Zoll contact form on $\Sigma$. In particular, we have the following consequence of Theorem \ref{c1=c2} and Theorem 

%\begin{theorem}\label{c1=c2=amin}
%Let $\lambda$ be a Zoll contact form on the three sphere $S^3$. Then $c_1(S^3,\lambda) = c_2(S^3,\lambda) = \mathcal{A}_{\min}(S^3,\lambda)$.
%\end{theorem}
We reinforce that the ``only if'' direction in Theorem \ref{echlens} also holds for a general closed contact $3$-manifold $Y$, see Theorem \ref{closelech} and Lemma \ref{closel=0} below.

\begin{remark}
    We recall that given a Finsler metric $F$ on a closed surface $\Sigma$, the unit tangent bundle $S_F\Sigma$ admits the Hilbert contact form $\alpha$ such that the Reeb orbits are of the form $(c,\dot{c})$, where $c \colon I \to \Sigma$ is a closed geodesic on $(\Sigma,F)$ parametrized by arc length. Moreover, the action of $(c,\dot{c})$ coincides with the length $L(c) = \int_I F(c,\dot{c})$, see e.g. \cite{dorner2017finsler}. In particular, Theorem \ref{echlens} gives a characterization of Zoll Finsler metrics on $S^2$ or $\R P^2$ in terms of ECH spectral invariants since $S_FS^2$ and $S_F\R P^2$ are diffeomorphic to $L(2,1)$ and $L(4,1)$, respectively.
\end{remark}

\begin{remark}\label{amincond}
    In \cite[Theorem 1.1]{mazzucchelli2023structure}, Mazzucchelli and Radeschi proved that every Besse $(S^3,\lambda)$ is strictly contactomorphic to an ellipsoid $(E(a,b),\lambda_0)$ with $a/b \in \mathbb{Q}$. Using this fact and computing the invariants in the latter case, one can drop the $\mathcal{A}_{min}(\lambda)$ condition in Theorem \ref{thm:zoll iff c1=c2}, i.e., it is automatically satisfied. From an analogous result for Lens spaces, one can also drop the $\mathcal{A}_{min}(\lambda)$ condition in Theorem \ref{echlens} in the cost of asking for the existence of an ECH class $\sigma$ such that $U\sigma \neq 0$, $c^{ECH}_\sigma(L(p,1),\lambda) = c^{ECH}_{U\sigma}(L(p,1),\lambda)$ but $c^{ECH}_{U^2\sigma}(L(p,1),\lambda) = 0$.
\end{remark}

Since the “only if” direction in Theorem \ref{thm:zoll iff c1=c2} holds in full generality, a natural question is how large the class of closed contact manifolds is for which Theorem \ref{thm:zoll iff c1=c2} holds.

%A natural question is how big is the class of closed contact manifolds where Theorem \ref{thm:zoll iff c1=c2} holds true, since the only if direction in \ref{thm:zoll iff c1=c2} holds in full generality. 

Our next result shows that \( Y = S^3 = L(1,1) \) is the unique Lens space \( L(p,1) \) for which the characterization in Theorem~\ref{thm:zoll iff c1=c2} holds.

\begin{theorem}\label{c2lp1}
    Let $p>1$ and $\lambda_0$ be the standard Zoll contact form on $L(p,1)$ induced by the standard Liouville form on $\C^2$. Then $c_1(L(p,1),\lambda_0) = \mathcal{A}_{min}(\lambda_0)$ and $c_2(L(p,1),\lambda_0) = 2\mathcal{A}_{min}(\lambda_0)$.
\end{theorem}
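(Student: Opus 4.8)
The plan is to sandwich each invariant between a soft upper bound coming from the Spectral Gap Closing Bound and a lower bound coming from an ECH computation, with the topology of $L(p,1)$ entering only in the latter. Write $A:=\mathcal{A}_{\min}(\lambda_0)$. Using $c_0=0$ together with property \eqref{specgapprop} in the form $c_k(\lambda_0)\le c_{k-1}(\lambda_0)+A$ (or, directly, by exhibiting a configuration of two curves of action $A$ through the two marked points) one gets $c_1(L(p,1),\lambda_0)\le A$ and $c_2(L(p,1),\lambda_0)\le 2A$. Since any nonconstant curve through a marked point is asymptotic to a Reeb orbit and hence has energy at least $A$, one also has $c_1\ge A$, so $c_1(L(p,1),\lambda_0)=A$ is immediate. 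The entire content of the theorem is therefore the matching lower bound $c_2(L(p,1),\lambda_0)\ge 2A$, which by Theorem~\ref{thm:zoll iff c1=c2} must \emph{fail} for $S^3=L(1,1)$; so the argument has to use $p>1$ in an essential way.

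For the lower bound I would use the prequantization description of the Zoll form: $(L(p,1),\lambda_0)$ is the circle bundle $\pi\colon L(p,1)\to (S^2,\omega)$ of Euler number $-p$, whose fibers are the Reeb orbits (all of action $A$), with $\int_{S^2}\omega=pA$ and $H_1(L(p,1))=\Z/p$ generated by a fiber. The geometric heart is that, by Stokes' theorem, a curve in the symplectization completion with positive ends of total action $T$ projects to a subset of $S^2$ of total $\omega$-area $T$; a multiplicity-one configuration has action $A$ and a $\pi$-image of area $A=\tfrac1p\int_{S^2}\omega$, i.e. a \emph{proper}, localized portion of the base whenever $p>1$. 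Thus for marked points whose projections to $S^2$ are far apart, no multiplicity-one configuration can meet both, and one is forced up to total multiplicity $2$, i.e. action $2A$. On $S^3$ ($p=1$) the analogous curve has image of full area $\int_{S^2}\omega$, covering the whole base and meeting any two points—this is exactly why $c_2(S^3,\lambda)=\mathcal{A}_{\min}$ while $c_2(L(p,1),\lambda_0)=2\mathcal{A}_{\min}(\lambda_0)$.

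To make this rigorous I would run it through the comparison between the elementary invariants and ECH spectral invariants underlying the general lower-bound direction (Theorem~\ref{closelech}, Lemma~\ref{closel=0}): $c_2(L(p,1),\lambda_0)$ is bounded below by $c^{ECH}_\sigma(L(p,1),\lambda_0)$ for the class $\sigma$ in the grading position that computes $c_2$. I would then perturb the Morse--Bott family by a perfect Morse function on $S^2$, so that the simple orbits become two elliptic orbits $e_1,e_2$ of action $\approx A$ and homology class $1\in\Z/p$, and the ECH generators are the products $e_1^m e_2^n$, of action $\approx(m+n)A$ and class $(m+n)\bmod p$. Because $p>1$, the only orbit sets of action less than $2A$ are the empty set (action $0$) and the multiplicity-one simple orbits (action $A$, class $1\in\Z/p$); a grading/$\mathrm{Spin}^c$ computation shows that none of these represents $\sigma$. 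Hence every representative of $\sigma$ has action at least $2A$, so $c^{ECH}_\sigma=2A$ and the lower bound follows, whereas on $S^3$ the vanishing of $H_1$ places both multiplicity-one generators in the position computing $c_2$, collapsing the invariant to $\mathcal{A}_{\min}$.

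The main obstacle is this last ECH step. One must compute the ECH index grading in the nontrivial bundle—both the relative first Chern class and the self-intersection term depend on the Euler number $-p$—accurately enough to locate the class $\sigma$ computing $c_2$ and to verify that the comparison of Theorem~\ref{closelech} indeed selects a class lying above the multiplicity-one generators, rather than one at action $\mathcal{A}_{\min}$ as happens for $S^3$. Controlling the Morse--Bott degeneracy through a perturbation-and-limit argument, and checking that the max--min over marked-point configurations is actually realized—so that the adversarial far-apart configuration is admissible—are the remaining technical points.
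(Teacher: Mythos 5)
The decisive step of your proposal---the lower bound $c_2(L(p,1),\lambda_0)\ge 2\mathcal{A}_{\min}(\lambda_0)$---rests on a comparison that does not exist. Theorem~\ref{closelech} and Lemma~\ref{closel=0} bound $\mathrm{Close}^L(Y,\lambda)$ from above by gaps of ECH spectral invariants; neither result, nor anything in \cite{hutchings2022elementary}, bounds $c_k$ from \emph{below} by an ECH spectral invariant. The known comparisons all go the other way: $c_k(Y,\lambda)\le\inf\{c^{ECH}_\sigma(Y,\lambda)\mid U^k\sigma\ne 0\}$, and $c_k\le c_k^{ECH}$ in the Liouville-fillable case. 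So your (essentially correct) computation that the relevant ECH class has spectral invariant $2\mathcal{A}_{\min}$ transfers no information to $c_2$ in the direction you need; asserting ``$c_2\ge c^{ECH}_\sigma$ for the class in the grading position that computes $c_2$'' amounts to asserting equality in Hutchings' inequality, which is not an available tool but an instance of what is to be proved. A lower bound on $c_2$ can only be extracted from Definition~\ref{def:ck}: one must produce an admissible $J$ and two marked points such that \emph{every} $J$-holomorphic curve through both has $\mathcal{E}_+\ge 2\mathcal{A}_{\min}$, i.e.\ one must rule out curves whose positive asymptotics consist of a single simple orbit. This is exactly what the paper does, for $\R$-invariant $J$ and marked points over distinct fibers, via the line-bundle dictionary of Section~\ref{sec:meromorphic}: such a curve would produce a meromorphic section with a single simple pole of a holomorphic line bundle of degree $\le -p<-1$, contradicting Lemma~\ref{numberofpoles} (the genus-$0$ case uses linking numbers and Lemma~\ref{oncesection}, the genus $>0$ case a pulled-back bundle). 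Your Stokes heuristic gestures at an alternative proof of this same nonexistence statement, but as written it is flawed: a subset of $S^2$ of $\omega$-area $\mathcal{A}_{\min}$ need not be ``localized'', so smallness of area alone never prevents the projection from meeting two far-apart points. What saves the argument is integrality of degree, not area: if the marked points lie over distinct fibers, the projection $h$ of any curve through both is nonconstant; finite energy forces $h$ to extend holomorphically over the punctures (an essential singularity would have infinite area), so $h$ is surjective of degree $\ge 1$, whence $\mathcal{E}_+(u)\ge\int u^*d\lambda=\mathrm{area}(h)\ge p\,\mathcal{A}_{\min}\ge 2\mathcal{A}_{\min}$. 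That route, made precise along the lines of Remark~\ref{rmkbourg}, would work, but it is not the route you propose to follow.

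Your upper bounds are also not justified as stated. The Spectral Gap Closing Bound \eqref{specgapprop} reads $\mathrm{Close}^L(Y,\lambda)\le c_k-c_{k-1}$: it bounds the gap from \emph{below} (and is vacuous here, since $\mathrm{Close}^L=0$ for a Zoll form by Lemma~\ref{closel=0}), so ``$c_k\le c_{k-1}+\mathcal{A}_{\min}$'' is not a form of it. The fallback of exhibiting two cylinders of action $\mathcal{A}_{\min}$ through the marked points fails as well: the supremum in Definition~\ref{def:ck} ranges over all cobordism-compatible $J$, not only $\R$-invariant ones, and for a general such $J$ the trivial cylinders over Reeb orbits are not $J$-holomorphic, so no explicit curves are available. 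The paper instead obtains $c_1\le\mathcal{A}_{\min}$ from Hutchings' inequality $c_k\le\inf\{c^{ECH}_\sigma\mid U^k\sigma\ne 0\}$ combined with Lemma~\ref{echspeclp1} (which supplies $\sigma$ with $U\sigma\ne 0$ and $c^{ECH}_\sigma=\mathcal{A}_{\min}$), and then $c_2\le 2c_1$ by sublinearity. Note that classes with nonzero $\Gamma\in H_1(L(p,1);\Z)$ are essential for this: the Liouville-domain property alone would only give $c_1\le c_1^{ECH}=p\,\mathcal{A}_{\min}$, since $c_1^{ECH}$ is defined through classes in $ECH(Y,\xi,0)$.
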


The contact invariants $c_k(Y,\lambda)$ are way simpler to define in comparison with ECH spectral invariants $c^{ECH}_\sigma(Y,\lambda)$. However, Theorem \ref{echlens} and Theorem \ref{c2lp1} show that the ECH spectral invariants are capable of realizing more dynamical properties than the max-min invariants $c_k(Y,\lambda)$ in some sense.

Now turning to the characterization of Besse contact forms, we prove the following result inspired by \cite[Lemma 3.1]{cristofaro2020action}, and give a similar characterization for Besse contact forms in terms of the max-min invariants $c_k$.

\begin{theorem}\label{besse}
Let $\lambda$ be a contact form on a closed three-manifold $Y$. Then $\lambda$ is a Besse contact form if and only if $c_k(Y,\lambda) = c_{k+1}(Y,\lambda)$ for some $k > 0$.
\end{theorem}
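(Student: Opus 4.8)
The plan is to prove Theorem~\ref{besse} by relating the max-min invariants $c_k$ to the action spectrum, exploiting both the Spectral Gap Closing Bound property \eqref{specgapprop} and the known characterization of Besse forms. First I would establish the forward direction. Suppose $\lambda$ is Besse. By Wadsley's theorem all Reeb orbits share a common period $T$, so the action spectrum is contained in the arithmetic progression $\{mT/\ell : m \in \mathbb{Z}_{>0}\}$ for a suitable multiplicity $\ell$ coming from the orbit structure. The invariants $c_k$ are expected to be (finite) elements of the action spectrum that grow in a controlled, eventually-linear way as $k \to \infty$—this should follow from the asymptotics established for $c_k$ together with the fact that the spectrum is discrete and lacunary in the Besse case. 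The idea is that the gaps between consecutive spectral values are bounded below, while the Spectral Gap Closing Bound forces $c_{k+1}(Y,\lambda) - c_k(Y,\lambda)$ to be small for infinitely many $k$; combining these should pin down an index $k>0$ at which $c_k = c_{k+1}$.

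For the converse, I would argue contrapositively: if $\lambda$ is \emph{not} Besse, then the set of non-closed Reeb trajectories is nonempty, and in fact the action spectrum fails to be the arithmetic-progression type forced by the Besse condition. The key is that the Spectral Gap Closing Bound property \eqref{specgapprop} should give, for each $k$, an upper bound on $c_{k+1}-c_k$ in terms of the minimal ``spectral gap'' near level $c_k$; and I would want to show that strict inequality $c_k < c_{k+1}$ persists for all $k>0$ whenever the flow has a trajectory that is not closed. Concretely, the failure of the equality $c_k=c_{k+1}$ should be detected by the presence of genuinely distinct marked-point configurations realizing strictly increasing energies, because a non-closed orbit prevents the kind of degeneration that would force two consecutive invariants to coincide.

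A cleaner route, which I would pursue in parallel, is to mimic the strategy of \cite[Lemma 3.1]{cristofaro2020action} directly. That lemma characterizes the Besse condition via the coincidence of two consecutive ECH spectral invariants linked by the $U$-map; the max-min invariants $c_k$ play a role analogous to the ECH spectral numbers $c^{ECH}_{\sigma}$ along a sequence related by $U$. I would set up a dictionary between the increment $c_{k+1}-c_k$ and the action of the ``new'' orbit being added, then transcribe the argument: the equality $c_k = c_{k+1}$ means no new action is contributed at that step, which (via the max-min characterization) forces every Reeb trajectory through the relevant marked points to close up, giving the Besse condition. Here I would lean on Lemma~\ref{closel=0} and the spectral gap property exactly as invoked for the ``only if'' direction of Theorem~\ref{thm:zoll iff c1=c2}.

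The main obstacle I anticipate is the converse direction: deducing the \emph{global} Besse property (all trajectories closed) from a \emph{single} equality $c_k = c_{k+1}$ at one value of $k$. The max-min invariants are defined via a supremum over $R$ of a min-max over pseudoholomorphic curves through $k$ marked points, so an equality at one index only directly constrains the behavior near that energy level and those marked-point configurations. Upgrading this to the statement that the \emph{entire} Reeb flow is periodic requires showing that the coincidence $c_k=c_{k+1}$ cannot happen ``accidentally'' at an isolated energy while some trajectory remains non-closed—i.e., ruling out a local coincidence that does not propagate. I expect to handle this by combining the Spectral Gap Closing Bound (which controls how the $c_k$ can increase) with a compactness/foliation argument showing that the curves realizing $c_k$ and $c_{k+1}$ must share an asymptotic limit orbit, and that this forces a Besse-type foliation by closed orbits in a neighborhood, which then spreads by an open-and-closed argument to all of $Y$.
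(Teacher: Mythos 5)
Your proposal assembles the right ingredients but misapplies the key tools in both directions, and in one place invents an obstacle that the paper's machinery has already removed. For the direction ``Besse $\Rightarrow c_k = c_{k+1}$ for some $k$'', you claim the Spectral Gap Closing Bound ``forces $c_{k+1}(Y,\lambda)-c_k(Y,\lambda)$ to be small for infinitely many $k$''. It does no such thing: \eqref{specgapprop} bounds $\mathrm{Close}^L(Y,\lambda)$ \emph{from above} by the gap $c_k - c_{k-1}$; it gives no upper bound on the gap itself, and it plays a role only in the other direction of the theorem. The mechanism that actually closes the gap is the tension between the Spectrality and Asymptotics properties in Theorem \ref{ckproperties}: by Wadsley's theorem, together with the fact that $\mathcal{A}_{min}(\lambda)>0$ bounds the multiplicities $k_\gamma$ and hence makes the set of them finite, all periods are integer multiples of a single $T>0$; Spectrality then places every $c_k(Y,\lambda)$ in the lattice $\{nT \mid n \in \mathbb{Z}_{>0}\}$. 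If every consecutive gap were nonzero, each would be at least $T$, forcing $c_k(Y,\lambda) \geq kT$ and hence $c_k(Y,\lambda)^2/k \geq kT^2 \to \infty$, contradicting $\lim_{k\to\infty} c_k(Y,\lambda)^2/k = 2\mathrm{Vol}(Y,\lambda) < \infty$. Note also that your description of the growth as ``eventually-linear'' is exactly backwards: the Asymptotics property gives square-root growth, and it is the \emph{incompatibility} of that sublinear growth with the linear lower bound $kT$ that produces the desired index.

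For the direction ``$c_k = c_{k+1} \Rightarrow$ Besse'', your third paragraph correctly identifies the route (the Spectral Gap Closing Bound together with Lemma \ref{closel=0}), but your final paragraph then manufactures a spurious ``main obstacle'' and proposes vague machinery (shared asymptotic limit orbits, a local foliation, an open-and-closed argument) to overcome it. There is no locality issue: if $c_k(Y,\lambda) = c_{k+1}(Y,\lambda) = L_0$, then \eqref{specgapprop} applied with $L = L_0$ gives $\mathrm{Close}^{L_0}(Y,\lambda) \leq c_{k+1}(Y,\lambda) - c_k(Y,\lambda) = 0$, and Lemma \ref{closel=0} states precisely that $\mathrm{Close}^{L_0}(Y,\lambda) = 0$ is equivalent to \emph{every} point of $Y$ lying on a Reeb orbit of action at most $L_0$, which is the global Besse property. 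The globality is already built into the definition of $\mathrm{Close}^L$ (it quantifies over positive deformations supported in arbitrary nonempty open sets) and into the proof of Lemma \ref{closel=0} (bump functions concentrated at an arbitrary point, plus Arzel\`a--Ascoli). Your separate contrapositive argument in the second paragraph, which tries to show that a non-closed trajectory forces strict inequalities $c_k < c_{k+1}$ for all $k$, is just this same implication read backwards, so it requires no independent proof; as written, it rests on unsubstantiated claims about marked-point configurations and cannot be completed as stated.
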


The arguments for the proof follows closely those of Cristofaro-Gardiner and Mazzucchelli, as it depends only on properties of the elementary spectral invariants which are entirely analogous to those of the ECH spectral invariants. For the sake of completeness, we provide the proof below.
\vskip 8pt
\noindent{\bf Structure of the paper:}
In Section 2, we recall the definition of ECH spectral invariants and review some of their properties. In Section 3, we introduce the max–min invariants $c_k(Y,\lambda)$, summarize their properties and prove Theorem \ref{besse}. Section 4 is devoted to Zoll contact forms on closed three-manifolds: we relate meromorphic sections of suitable complex line bundles to pseudoholomorphic curves in the symplectization of a Zoll contact manifold, compute certain ECH spectral invariants for $L(p,1)$, and prove Theorem \ref{echlens}. Finally, in Section 5, we prove Theorem \ref{thm:zoll iff c1=c2} and Theorem \ref{c2lp1}.

\vskip 8pt

\noindent{\bf Acknowledgments:}
The second author would like to thank Lucas Ambrozio for many helpful and inspiring conversations on the problem of characterizing Zoll manifolds. We thank Viktor Ginzburg for asking a question that motivated Remark \ref{amincond}.

\section{ECH spectral invariants and closing property}

One of the key ingredients that goes into the main theorem is the use of ECH spectral invariants, which we now recall. Let $\lambda$ be a nondegenerate contact form on $Y$. For a generic almost complex structure $J$ that is admissible in $\R \times Y$, the ECH chain complex $ECC(Y,\lambda,J)$ is a $\Z_2$ vector space generated by suitable (finite) orbit sets $\alpha = \{(\alpha_i,m_i)\}$. Here, $\alpha_i$ are embedded Reeb orbits on $(Y,\lambda)$ and $m_i$ are positive integers. The differential is defined by counting $J$-holomorphic currents asymptotic to orbit sets with \emph{ECH index} $1$. This index is a contact topological number associated to relative homology classes $Z \in H_2(Y,\alpha,\beta)$, where $\alpha$ and $\beta$ are chain complex generators. 

After a highly non-trivial analysis, it turns out that the differential is well defined and $\partial^2 = \partial \circ \partial = 0$. In particular, we have a well-defined homology $ECH(Y,\lambda,J)$, see \cite{hutchings2014lecture} for a good introduction on this topic.

\begin{theorem}[Taubes \cite{taubes2010embedded}]\label{taubes}
There is a canonical isomorphism of relatively graded modules
\[ECH_*(Y,\lambda,\Gamma,J) = \widehat{HM}^{-*}(Y,\mathfrak{s}_\xi + PD(\Gamma)).\]
\end{theorem}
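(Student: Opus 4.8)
The plan is to follow Taubes's strategy of deforming the three-dimensional Seiberg--Witten equations using the contact form and extracting pseudoholomorphic data in the large-perturbation limit. First I would fix the $\mathrm{spin}^c$ structure $\mathfrak{s}_\xi + PD(\Gamma)$ determined by the contact structure and the homology class $\Gamma$, and write down the family of perturbed Seiberg--Witten equations on $Y$ depending on a real parameter $r>0$, where the perturbation is built from $\lambda$ and $d\lambda$ so that the curvature term is driven by $r\,d\lambda$. The purpose of this perturbation is that, as $r\to\infty$, the zero set of the distinguished component of the spinor concentrates along the Reeb orbits of $\lambda$, and the energy of a solution localizes near closed orbits.

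The first main step is to establish a bijection, for all sufficiently large $r$, between the gauge-equivalence classes of solutions of the perturbed three-dimensional equations and the ECH generators, i.e.\ the admissible orbit sets $\alpha=\{(\alpha_i,m_i)\}$. The multiplicities $m_i$ should arise from the local winding of the spinor around each embedded Reeb orbit $\alpha_i$. This requires the a priori estimates that force concentration, together with a local model near each embedded orbit that identifies the solution space with vortex moduli spaces encoding the multiplicities, and a careful treatment of the reducible solutions.

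The second main step is to match the differentials. On the four-dimensional side, one studies the instanton (i.e.\ $\R$-translation-interpolating) solutions of the perturbed Seiberg--Witten equations on the symplectization $\R\times Y$ connecting two three-dimensional solutions. As $r\to\infty$ these should concentrate along pseudoholomorphic currents in $\R\times Y$; I would show that the relevant index-one Seiberg--Witten moduli spaces are in bijection with the ECH index-one $J$-holomorphic currents counted by the ECH differential, with matching multiplicities. Combined with a comparison of relative gradings---the ECH index on one side against the spectral flow / relative grading of monopole Floer homology on the other, which accounts for the sign reversal $\ast \leftrightarrow -\ast$---this yields a chain homotopy equivalence. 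Finally I would verify canonicity: independence of $J$, of the generic abstract perturbation needed for transversality, and of the large value of $r$, via a continuation argument.

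The hard part is exactly the large-$r$ analytic correspondence between Seiberg--Witten solutions and holomorphic curves: proving the uniform estimates, the exponential localization of the spinor near Reeb orbits, and the surjectivity and gluing statements guaranteeing that no solutions are created or lost in the limit and that the counts (with multiplicity over $\Z_2$) agree exactly. Controlling multiply covered orbits and reducibles, and ensuring transversality can be achieved within the admissible perturbation class, is where the overwhelming technical weight of the argument lies; this is precisely the content developed across Taubes's series of papers, so in practice I would cite \cite{taubes2010embedded} rather than reproduce it.
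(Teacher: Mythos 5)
The paper offers no proof of this statement: it is quoted as an external result of Taubes, with the entire content deferred to the citation \cite{taubes2010embedded}. Your outline is a faithful high-level summary of Taubes's actual strategy (large-$r$ perturbation of the Seiberg--Witten equations by the contact form, concentration of spinors along Reeb orbits with multiplicities from vortex-type local models, matching of differentials via instantons limiting to ECH index-one holomorphic currents, and the grading reversal), and since you too ultimately defer the analytic core to the same citation, your approach coincides with the paper's.
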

In the theorem above, $\widehat{HM}^{-*}(Y,\mathfrak{s}_\xi + PD(\Gamma))$ denotes the ``from" version of Seiberg--Witten Floer cohomology defined by Kronheimer and Mrowka in \cite{kronheimer2007monopoles}.

In particular, the homology $ECH(Y,\lambda,J)$ does not depend on $\lambda$ or on $J$, and then we write $ECH(Y,\xi)$.

Since the differential decreases total action of orbit sets, there is also a well-defined filtered homology $ECH^L(Y,\lambda,J)$ given $L>0$ generated by orbit sets whose total action $\mathcal{A}(\alpha) = \sum_i \int_{\alpha_i} \lambda$ is less than $L$.
In \cite{hutchings2013proof}, Hutchings and Taubes proved this filtered homology does not depend on $J$ and that the map induced by inclusion
$$\imath_L \colon ECH^L(Y,\lambda) \to ECH(Y,\xi)$$
also does not depend on $J$. Given a nonzero class $\sigma \in ECH(Y,\xi)$, we define its \emph{ECH spectral invariant} as the number
$$c_\sigma^{ECH}(Y,\lambda) = \inf\{ L \mid \sigma \in \imath_L(ECH^L(Y,\lambda))\}.$$

We extend this definition to degenerate contact forms using the fact that this invariant is $C^0$-continuous with respect to $\lambda$.

There is an additional structure on ECH given by a degree $-2$ map. This map, usually denoted by $U$, is defined in the chain complex level and counts ECH index $2$ $J$-holomorphic currents connecting chain complex generators and passing through $(y,0) \in \R \times Y$, where $y$ is a generic point on $Y$ not contained in a Reeb orbit. It turns out that this is a well-defined homology map that also does not depend on $\lambda$ or $J$. Moreover, it is shown by Taubes in \cite{taubes2010embeddedV} that this map coincides with the analogous $U$-module structure on the Seiberg--Witten Floer cohomology under the isomorphism in Theorem \ref{taubes}.

Now, we recall how the gap in the spectral invariants can be used to study the closedness of orbits.
\begin{defi}
    Let $(Y,\lambda)$ be a closed contact $3$-manifold, and let $U$ be a non empty open set. A \textit{positive deformation} of $\lambda$ supported in $U$ is a smooth one parameter family of contact forms 
    $$\{\lambda_{t} = e^{f_t}\lambda\}_{t\in [0,1]},$$
satisfying:
\begin{itemize}
    \item $f_0 \equiv 0.$
    \item $f_t = 0$ for all $t \in [0,1]$ and $y\in Y\backslash U$.
    \item $f_1 \geq 0$, and $f_1$ does not vanish identically.
\end{itemize}
\end{defi}
Now, for a contact manifold $(Y,\lambda)$, we can associate its symplectization 
$$(\mathbb{R}\times Y, \omega = d(e^s\lambda)),$$
where $s$ denotes the $\mathbb{R}$ parameter. Recall that if $(M,\omega)$ is symplectic $4$-manifold, then its \textit{Gromov width}
$$c_{Gr}(M,\omega)  \in (0,\infty],$$
is defined as the supremum of $a>0$, such that there exists a symplectic embedding 
$$B^4(a) \longrightarrow (M,\omega),$$
where 
$$B^4(a) = \{z \in \mathbb{C}^2 \mid \pi |z|^2 \leq a \},$$
endowed with the restriction of canonical symplectic form from $\mathbb{C}^2$.
\begin{defi}
    For a positive deformation $\{\lambda_t = e^{f_t} \lambda\}_{t\in [0,1]}$, consider
    $$M_{\lambda_1} = \{(s,y) \in \mathbb{R}\times Y \mid 0<s<f_1(y)\},$$
    endowed with the restriction of the symplectic form $d(e^s\lambda)$. We define the \textit{width} of the positive deformation as
    $$\mathrm{Width} (\lambda_t) = c_{Gr}(M_{\lambda_1}) \in (0,\infty).$$
\end{defi}
\begin{defi}
    Let $(Y,\lambda)$ be a close contact $3$-manifold, and let $L>0$. Define 
    $$\mathrm{Close}^L(Y,\lambda) \in [0,\infty],$$
    to be the infimum of $\delta>0$ satisfying the following property:\\ \\
    For any $\{\lambda_t\}$ positive deformation of $\lambda$ with $\mathrm{Width}(\lambda_t) \geq \delta$, if supported in a nonempty  set $U$, then there exists  $t \in [0,1]$, such that the contact form $\lambda_t$ has a Reeb orbit intersecting $U$ with action at most $L$.
\end{defi}

\begin{remark} \label{rmk:CloseL infinity smaller then the minimal action}
    If $L$ is less then the minimum action of a Reeb orbit of $(Y,\lambda)$, then for any $\delta>0$, by taking $U=Y$ and $\lambda_t = e^{rt}\lambda$, with $r>0$ sufficiently large with respect to $\delta$, then we conclude $\mathrm{Close}^L(Y,\lambda)>\delta$, therefore $\mathrm{Close}^L(Y,\lambda) = \infty$.
\end{remark}
The following property of \(\text{Close}^L(Y,\lambda)\) is important for what follows, and we highlight it in the next lemma, providing a proof for the sake of completeness.

\begin{lemma}\label{closel=0}
Let $L>0$. Then $\mathrm{Close}^L(Y,\lambda) = 0$ holds if and only if every point in $Y$ is contained in a Reeb orbit of action at most $L$.
\begin{proof} Assume that $(Y,\lambda)$ is Besse, and let $L$ be larger than the period of any simple Reeb orbit. Then, for any positive deformation $\{\lambda_{\tau}\}_{\tau\in [0,1]}$, the form $\lambda_0 = \lambda$ has a Reeb orbit passing through its support with action at most $L$. Therefore $\mathrm{Close}^L(Y,\lambda) = 0$.

Now, assume $\text{Close}^L(Y,\lambda) = 0$. Take $p \in Y$, and consider a sequence of bump functions $f_n$ supported in neighborhoods $U_n$ of $p$ decreasing to $p$. Then, it is not hard to see that for each $n$, the positive deformation $\{\lambda_{\tau} = \tau  f_n\}_{\tau \in [0,1]}$ has $\text{Width}(\lambda_{\tau}) >0$, see \cite[Proposition 6.2]{edtmair2021pfh}). Then, for some $\tau \in [0,1]$, the contact form $\lambda_{\tau}$ has a Reeb orbit $\gamma_n$ passing through $U_n$. From Arzelà–Ascoli theorem, we can extract a subsequence of $\gamma_n$ converging to a loop $\gamma$. Then, from the choice positive deformation, it follows that $\gamma$ is a Reeb orbit of $\lambda$ with action at most $L$ that passes through $p$.
\end{proof}
\end{lemma}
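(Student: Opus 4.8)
\section*{Proof proposal for Lemma \ref{closel=0}}

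The plan is to prove the two implications separately; the ``if'' direction is immediate and the ``only if'' direction carries the content. For ``if'', suppose every point of $Y$ lies on a Reeb orbit of $\lambda$ of action at most $L$. Then for any positive deformation $\{\lambda_t = e^{f_t}\lambda\}$ supported in a nonempty open set $U$, the time-zero form is $\lambda_0 = \lambda$ (since $f_0 \equiv 0$); choosing any $q \in U$, the hypothesis produces a Reeb orbit of $\lambda_0$ through $q$ of action at most $L$, which meets $U$. Thus the defining closing condition is met at $t=0$, and this holds for every threshold $\delta>0$, so $\mathrm{Close}^L(Y,\lambda) = 0$. This also recovers, in slightly greater generality, the Besse case of the argument below.

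For ``only if'', assume $\mathrm{Close}^L(Y,\lambda) = 0$ and fix $p \in Y$; I want to produce a Reeb orbit of $\lambda$ through $p$ of action at most $L$. I would choose nested neighborhoods $U_n \downarrow \{p\}$ and nonnegative bump functions $f_n$ supported in $U_n$, and form the localized positive deformations $\lambda^{(n)}_\tau = e^{\tau f_n}\lambda$. Each has positive width by \cite[Proposition 6.2]{edtmair2021pfh}, so $\mathrm{Close}^L = 0$ yields some $\tau_n \in [0,1]$ and a Reeb orbit $\gamma_n$ of $\lambda^{(n)}_{\tau_n}$ meeting $U_n$ with action at most $L$. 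The structural tool I would lean on is the conformal Reeb-field formula: the Reeb field of $e^{g}\lambda$ equals $e^{-g}(R + X_g)$, where $X_g \in \xi$ is characterized by $d\lambda(X_g,v) = dg(v)$ for all $v \in \xi$. Up to positive reparametrization the orbits of $\lambda^{(n)}_{\tau_n}$ thus coincide with those of $Z_n := R + X_{\tau_n f_n}$, a field equal to $R$ on $Y \setminus U_n$ and satisfying $\lambda(Z_n) \equiv 1$; in particular $\int_{\gamma_n}\lambda \le \int_{\gamma_n}\lambda^{(n)}_{\tau_n} \le L$, so the $Z_n$-flow time around $\gamma_n$ is at most $L$.

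The main obstacle is extracting a limit and certifying it as a bona fide Reeb orbit of $\lambda$ through $p$. Because $f_n$ may have large derivatives, $Z_n$ can be huge inside $U_n$, so a priori $\gamma_n$ oscillates wildly near $p$ and the compactness (Arzel\`a--Ascoli) needs care: I would decompose $\gamma_n$ into arcs lying in $Y \setminus U_n$, which are genuine $R$-trajectories of total elapsed time at most $L$, joined by excursions inside $U_n$. The $R$-arcs converge by Arzel\`a--Ascoli applied to the smooth, complete Reeb flow of $\lambda$, while the excursions, trapped in $U_n \downarrow \{p\}$, degenerate to passages through $p$; the limit $\gamma$ should then be an $R$-trajectory that leaves and returns to $p$, of action at most $L$. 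The two delicate points I would still need to settle are ruling out collapse to the constant loop at $p$ (a uniform positive lower bound on the periods of the $\gamma_n$, equivalently nonconstancy of the limit) and confirming that the concatenation closes up into a single smooth closed orbit rather than a broken trajectory. Once $\gamma$ is identified as a flow line of the smooth field $R$ through $p$, smoothness and the Reeb equation follow automatically.
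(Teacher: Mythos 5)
Your proposal follows exactly the paper's strategy: the same shrinking bumps $f_n$ supported in $U_n \downarrow \{p\}$, the same appeal to \cite[Proposition 6.2]{edtmair2021pfh} for positive width, and a limiting argument on the resulting closed orbits $\gamma_n$. Your ``if'' direction is correct (and stated more directly than the paper's, which phrases it via the Besse condition). In the ``only if'' direction your setup, the conformal formula $R_{e^g\lambda}=e^{-g}(R+X_g)$, and the observation that $Z_n = R+X_{\tau_n f_n}$ equals $R$ outside $U_n$ with $\int_{\gamma_n}\lambda \le L$, are all correct; indeed you are more careful than the paper, whose one-sentence Arzel\`a--Ascoli step silently ignores that the fields $Z_n$ blow up inside $U_n$. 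But the proposal is incomplete by its own admission, and the two flagged points are of very different weight. The ``closing up into a single smooth orbit'' worry is not a genuine issue: you never need the broken limit to concatenate. Each component of $\gamma_n\setminus U_n$ is an honest Reeb trajectory $\varphi^{[0,t_n]}(x_n)$ with $x_n,\ \varphi^{t_n}(x_n)\in\partial U_n \to p$ and $t_n\le L$; if along a subsequence $t_n \to T>0$, continuity of the flow gives $\varphi^T(p)=p$, and uniqueness of integral curves then makes the orbit through $p$ periodic with minimal period at most $T\le L$. The genuine gap is the other point: nothing in the argument so far prevents every $\gamma_n$ from being contained in $U_n$, or from having all its exterior arc-times tend to $0$, in which case the limit is the constant loop and nothing is proved. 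The closing property only gives an action \emph{upper} bound, so this degeneration must be excluded by hand.

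This gap can be closed because the deformations are yours to choose. Fix a Darboux ball $B$ around $p$ with $\lambda = dz - y\,dx$ and $R=\partial_z$, and take $f_n = h\,\chi(\cdot/\epsilon_n)$ with a fixed profile $\chi$ and a uniformly small height $h$. Your formula for $X_g$ gives, in this chart, $X_f = f_y(\partial_x+y\partial_z) - (f_x+yf_z)\partial_y$, hence
$$dz(R+\tau X_{f_n}) \;=\; 1+\tau\, y\,\partial_y f_n \;\ge\; 1 - h\,\|\partial_y\chi\|_{C^0},$$
and the crucial point is that $|y\,\partial_y f_n|$ is bounded by the \emph{height} $h$ (since $|y|\le \epsilon_n$ cancels the factor $\epsilon_n^{-1}$ in $\|df_n\|$), not by the blowing-up derivative. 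So for $h$ small, $z$ is strictly increasing along every orbit of $Z_n$ inside $B$, and consequently no closed orbit of $e^{\tau f_n}\lambda$, $\tau\in[0,1]$, can stay in $B$. Each $\gamma_n$ therefore contains a component of $\gamma_n\setminus U_n$ reaching $Y\setminus B$: an $R$-arc between two points of $\partial U_n$ whose travel time is at least $2(r_B-\epsilon_n)/\sup|R| > 0$ and at most $L$. This is the missing uniform lower bound; combined with the flow-continuity argument above it yields $\varphi^T(p)=p$ with $0<T\le L$, finishing the proof. Note that the paper's own proof is silent on exactly this point, so supplying it (or an equivalent non-collapse argument) is what a complete write-up requires.
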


We recall how the gap in the ECH spectral invariants bounds the number $\mathrm{Close}^L(Y,\lambda)$.
\begin{theorem}[Hutchings \cite{hutchings2022elementary}]\label{closelech}
    $$\mathrm{Close}^L(Y,\lambda) \leq \inf \{c_\sigma^{ECH}(Y,\lambda) - c_{U\sigma}^{ECH}(Y,\lambda) \mid \ U\sigma \neq 0, \ c_\sigma^{ECH} \leq L\}.$$
\end{theorem}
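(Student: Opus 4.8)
The plan is to argue by contradiction, combining the first–variation behaviour of the ECH spectral invariants along the deformation with a monotonicity bound coming from an embedded symplectic ball. It suffices to prove that for every class $\sigma$ with $U\sigma \neq 0$ and $c_\sigma^{ECH}(Y,\lambda) \le L$, and every $\delta > c_\sigma^{ECH}(Y,\lambda) - c_{U\sigma}^{ECH}(Y,\lambda)$, any positive deformation $\{\lambda_t = e^{f_t}\lambda\}$ supported in a nonempty $U$ with $\mathrm{Width}(\lambda_t) \ge \delta$ has some $\lambda_t$ carrying a Reeb orbit that meets $U$ and has action at most $L$. So I would assume the contrary: that no $\lambda_t$, $t \in [0,1]$, possesses a Reeb orbit through $U$ of action $\le L$, and derive a contradiction.

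First I would show that under this assumption both $t \mapsto c_\sigma^{ECH}(\lambda_t)$ and $t \mapsto c_{U\sigma}^{ECH}(\lambda_t)$ are constant. By $C^0$–continuity these functions are Lipschitz, and by spectrality each value $c_\sigma^{ECH}(\lambda_t)$ is the total $\lambda_t$–action of an orbit set $\gamma_t$. Starting from $c_\sigma^{ECH}(\lambda_0) = c_\sigma^{ECH}(\lambda) \le L$, any realizing orbit set of total action $\le L$ has all of its constituent Reeb orbits of action $\le L$, so by hypothesis none of them meets $U \supseteq \operatorname{supp}(f_t)$; since $\lambda_t = \lambda$ off $U$, $\gamma_t$ is simultaneously a $\lambda$– and $\lambda_t$–orbit set whose action does not depend on $t$. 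The standard first–variation formula $\tfrac{d}{dt}c_\sigma^{ECH}(\lambda_t) = \sum_i m_i \int_{\gamma_{i,t}} (\partial_t f_t)\,\lambda_t$ then vanishes wherever it is defined, because $\partial_t f_t \equiv 0$ along $\gamma_t$. A standard open–closed argument on $[0,1]$ keeps the value $\le L$ throughout, so $c_\sigma^{ECH}(\lambda_t) \equiv c_\sigma^{ECH}(\lambda)$; the same reasoning applied to $U\sigma$ (using $c_{U\sigma}^{ECH}(\lambda) \le c_\sigma^{ECH}(\lambda) \le L$) gives $c_{U\sigma}^{ECH}(\lambda_t) \equiv c_{U\sigma}^{ECH}(\lambda)$. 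In particular the spectral gap at the endpoint satisfies $c_\sigma^{ECH}(\lambda_1) - c_{U\sigma}^{ECH}(\lambda_1) = c_\sigma^{ECH}(\lambda) - c_{U\sigma}^{ECH}(\lambda) < \delta \le \mathrm{Width}(\lambda_t)$.

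The second, and main, step is to contradict this by bounding the width from above by the endpoint gap. Fix $a < \mathrm{Width}(\lambda_t) = c_{Gr}(M_{\lambda_1})$ and a symplectic embedding $B^4(a) \hookrightarrow M_{\lambda_1} \subset (\R \times Y, d(e^s\lambda))$. Viewing $M_{\lambda_1}$ as the exact symplectic cobordism from $(Y,\lambda)$ (its bottom face $s=0$) to $(Y,\lambda_1)$ (its top face, the graph of $f_1$), I would complete it to the symplectization of $\lambda_1$ and use the Hutchings–Taubes cobordism maps together with the fact that the degree $-2$ map $U$ is realized by counting ECH index $2$ currents through a chosen generic point, which I place at the centre $z$ of the embedded ball inside $M_{\lambda_1}$. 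The connecting current $C$ from a generator in the $c_\sigma^{ECH}$–realizing cycle to one in the $c_{U\sigma}^{ECH}$–realizing cycle (nonempty since $U\sigma \neq 0$) then has symplectic area at most $c_\sigma^{ECH}(\lambda_1) - c_{U\sigma}^{ECH}(\lambda_1)$, by a Stokes computation across the cobordism. On the other hand $C$ passes through $z$ and is noncompact, hence escapes $\partial B^4(a)$, so Gromov monotonicity bounds the area of $C \cap B^4(a)$ from below by $a$. Letting $a \to \mathrm{Width}(\lambda_t)$ yields $\mathrm{Width}(\lambda_t) \le c_\sigma^{ECH}(\lambda_1) - c_{U\sigma}^{ECH}(\lambda_1)$, contradicting the inequality from the previous step.

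The main obstacle is this second step: one must make precise that the $U$–map for $\lambda_1$ can be computed by a current passing through a marked point sitting in the \emph{added} region $M_{\lambda_1}$, which requires the ECH cobordism–map formalism and the compatibility of $U$ with cobordism maps, and one must control the area of this current both from above (by the spectral gap, via careful neck–stretching and Stokes bookkeeping across the cobordism) and from below (by Gromov monotonicity through the embedded ball). By contrast, the constancy established in the first step is comparatively routine once the Lipschitz property and the first–variation formula for $c_\sigma^{ECH}$ are in hand. Since all of these ingredients hold for an arbitrary closed contact $3$–manifold, the resulting bound on $\mathrm{Close}^L(Y,\lambda)$ is completely general.
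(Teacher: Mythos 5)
First, a structural point: the paper itself does not prove Theorem~\ref{closelech}; it is quoted directly from Hutchings \cite{hutchings2022elementary}, so there is no in-paper proof to compare against. Measured against the argument in that reference, your two-step architecture is the right one: (i) constancy of $t\mapsto c_\sigma^{ECH}(\lambda_t)$ and $t\mapsto c_{U\sigma}^{ECH}(\lambda_t)$ under the assumption that no $\lambda_t$ has a Reeb orbit through $U$ of action at most $L$, and (ii) a ``ball-catching'' inequality, obtained from a holomorphic current through the center of an embedded ball in the cobordism $M_{\lambda_1}$, which forces $c_\sigma^{ECH}(\lambda_1)\ge a+c_{U\sigma}^{ECH}(\lambda)$ and hence a contradiction. (One bookkeeping slip: the Stokes estimate across the cobordism from $(Y,\lambda)$ to $(Y,\lambda_1)$ naturally bounds the current's area by $c_\sigma^{ECH}(\lambda_1)-c_{U\sigma}^{ECH}(\lambda)$, pairing the top action with the \emph{bottom} action, not by the gap computed entirely at $\lambda_1$; this is harmless once constancy is in place.)

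There are, however, two genuine gaps. The first is in the step you yourself flag as the main obstacle: the existence of the connecting current through the marked point $z$ is not a routine assembly of ``the ECH cobordism-map formalism.'' ECH cobordism maps are defined via Seiberg--Witten theory, not by counting holomorphic curves, so the statement that nonvanishing of the ($U$-composed, filtered) cobordism map produces a (generally \emph{broken}) holomorphic current through $z$ with the asserted asymptotics and action bounds is exactly the Hutchings--Taubes ``holomorphic curves axiom'' together with its $U$-map refinement; one must also telescope the Stokes estimate across the levels of a broken current, and choose the admissible $J$ to agree with the standard structure on the embedded ball so that monotonicity applies. Without these inputs made explicit, your step two is an outline of the known proof rather than a proof. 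The second gap is in step one: your open--closed argument only works when $c_\sigma^{ECH}(\lambda)<L$ \emph{strictly}, whereas the statement allows $c_\sigma^{ECH}(\lambda)=L$. The derivative/first-variation argument applies only at times $t$ where $c_\sigma^{ECH}(\lambda_t)\le L$ (otherwise the realizing orbit sets may meet $U$), and openness of the set $\{t : c_\sigma^{ECH}(\lambda_s)\le L \text{ for all } s\le t\}$ fails exactly when the constant value equals $L$: the hypothesis constrains the spectrum of $\lambda_t$ only in $[0,L]$, so a spectral value sitting at $L$ can move continuously into $(L,\infty)$, where nothing is controlled. Handling this boundary case (and the related subtlety that a $\lambda_t$-orbit touching $\partial U$ but avoiding $U$ need not be a $\lambda$-orbit, since $f_t$ is not required to be nonnegative for intermediate $t$) requires an additional argument that your proposal does not supply.
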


It follows from the definition of the chain complex that the ECH can be decomposed by singular homology classes

$$ECH(Y,\xi) = \bigoplus_{\Gamma \in H_1(Y;\Z)}ECH(Y,\xi,\Gamma).$$

Using the $U$ map structure, Hutchings defined a distinguished sequence of ECH spectral invariants in the case where the contact invariant $c(\xi) = [\emptyset] \in ECH(Y,\xi,0)$ is nonzero. This occurs in the case where $(Y,\xi)$ is symplectically fillable, for instance, see \cite[Example 1.10]{hutchings2014lecture}. This sequence is often called the \emph{ECH spectrum of $(Y,\lambda)$} and it is defined as
$$c_k^{ECH}(Y,\lambda) = \min  \{c_\sigma(Y,\lambda) \mid \ \sigma \in ECH(Y,\xi,0), \ U^k\sigma = [\emptyset]\}$$
for a nondegenerate contact form $\lambda$ on $Y$ and again extended by continuity for the degenerate case.

In the next section we recall the definition of the max-min invariants $c_k(Y,\lambda)$ recently defined by Hutchings and some of their properties.

\section{Max-min invariants from pseudoholomorphic curves}

\subsection{Definition of $c_k$ and its properties}In this section, we briefly recall the definition of the elementary spectral invariants defined by Hutchings and discuss their properties.

Let \((Y,\lambda)\) be a closed contact \(3\)-manifold, and consider its symplectization $\mathbb{R}\times Y$. For each \(R>0\), we denote by \(\mathcal{J}(\overline{[-R,0]\times Y})\) the set of cobordism-compatible almost complex structures on the cobordism \([-R,0]\times Y\); see \cite{hutchings2022elementary} for more details. Notice that \([-R,0]\times Y\) is a strong symplectic cobordism between \((Y,e^{-R}\lambda)\) and \((Y,\lambda)\), and its completion is the symplectization $\mathbb{R}\times Y$. For \(J \in \mathcal{J}(\overline{[-R,0]\times Y})\), and orbit sets \(\alpha_+\) of \((Y,\lambda)\) and \(\alpha_-\) of \((Y,e^{-R}\lambda)\), we denote by
\[
\mathcal{M}^J(\overline{[-R,0]\times Y},\alpha_+,\alpha_-)
\]
the set of pseudoholomorphic curves in the completion of \([-R,0]\times Y\) asymptotic to \(\alpha_{+}\) at their positive punctures and to \(\alpha_-\) at their negative ones; see Section~2 of \cite{hutchings2022elementary}. For \(u \in \mathcal{M}^J(\overline{[-R,0]\times Y},\alpha_+,\alpha_-)\), we denote by \(\mathcal{E}_+(u)\) its upper energy, defined by \(\mathcal{E}_+(u)= \mathcal{A}(\alpha_+)\). We are now ready to recall the definition of the max-min contact invariants $c_k(Y,\lambda)$.

\begin{defi}\label{def:ck}
    Let $(Y,\lambda)$ a closed contact $3$-manifold, and $k\geq0$. When $\lambda$ is nondegenerate, we define
    $$c_k(Y,\lambda) =\sup_{R>0} \sup_{\substack{J \in \mathcal{J}([-R,0]\times Y) \\ x_1,\ldots,x_k \in [-R,0]\times Y  \ \text{distinct}}} \inf_{u \in \mathcal{M}^J(\mathbb{R}\times Y, x_1,\ldots,x_k)}
\mathcal{E}_+(u).$$
When $\lambda$ is degenerate, we define 
$$c_{k}(Y,\lambda) = \inf_{f>0} c_{k}(e^f\lambda) = \sup_{f<0}c_k(Y,e^f\lambda),$$
where the supremum and infimum are taken over the set of smooth functions in $Y$ such that $e^f\lambda$ is nondegenerate.
\end{defi}
Notice that the definition above is not practical for computations for degenerate contact forms. We remark that there is a way to make it in terms of pseudoholomorphic curves.
\begin{remark}\label{rmkbourg} As pointed out in \cite[Lemma 3.8]{bourgeois2002morse}, the pseudoholomorphic curves considered in the definition of \(c_k(Y,\lambda)\) for the nondegenerate case coincide with pseudoholomorphic curves satisfying similar properties (not necessarily asymptotic at punctures) but with finite \textit{Hofer energy}. The latter set of curves converges to orbits at the punctures, as proved in \cite{hofer1993pseudoholomorphic}. Although pseudoholomorphic curves with finite Hofer energy may fail to converge to a single orbit when approaching the punctures in the case of a degenerate contact form, they still approach orbits (possibly different ones), as points approach the punctures via sequences, as demonstrated in \cite{hofer8properties}. An example where the orbits coming from a puncture are different was given in \cite{siefring2017finite}. Nonetheless, all such orbits share a common period (called the \textit{mass}) and we can define \(c_k\) in terms of these curves for the degenerate case. The two definitions coincide, since the definition above is the continuous extension of the nondegenerate case.

\end{remark}
Now we recall some properties of the contact invariants $c_k(Y,\lambda)$.

\begin{theorem}[Hutchings \cite{hutchings2022elementary}]\label{ckproperties}
Let $(Y,\lambda)$ be any closed contact three-manifold. The invariants $c_k$ have the following properties:
\begin{enumerate}

\item (Conformality) If $r > 0$, then
$$c_k(Y, r\lambda) = r c_k(Y,\lambda).$$

\item (Sublinearity) $$c_{k+l}(Y,\lambda) \leq c_k(Y,\lambda) + c_l(Y,\lambda).$$

\item (Spectrality)\label{spectrality} For a given $k$, there exists an orbit set $\alpha$ such that $c_k(Y,\lambda) = \mathcal{A}(\alpha)$.

\item(Liouville Domains) If $(Y,\lambda)$ is the boundary of a Liouville domain $(X,\omega)$, then
$$c_k(Y,\lambda) \leq c_k^{Alt}(X,\omega) \leq c_k^{ECH}(Y,\lambda).$$

\item (Sphere) $$c_k(S^3,\lambda_0) = d,$$
where $d$ is the unique nonnegative integer such that
$$d^2 + d \leq 2k \leq d^2 + 3d.$$

\item (Asymptotics) \label{asymptotic} $$\lim_{k\to \infty} \frac{c_k(Y,\lambda)^2}{k} = 2\mathrm{Vol}(Y,\lambda).$$

\item (Spectral Gap Closing Bound) If $k > 0$ and $c_k(Y,\lambda) \leq L$, then
\begin{equation}\label{specgapprop}
\mathrm{Close}^L(Y,\lambda) \leq c_k(Y,\lambda) - c_{k-1}(Y,\lambda).
\end{equation}
\end{enumerate}
\end{theorem}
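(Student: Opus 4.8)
The plan is to prove the seven properties in three tiers according to the tools required. \emph{Conformality} and \emph{Sublinearity} are formal consequences of the definition. For conformality, observe that the symplectization of $(Y,r\lambda)$ is $(\R\times Y,d(e^{s}r\lambda))=(\R\times Y,d(e^{s+\log r}\lambda))$, so translation by $-\log r$ in the $\R$-direction identifies it with the symplectization of $(Y,\lambda)$, matching cobordism-compatible almost complex structures, marked points, and moduli spaces; since actions rescale by $r$ under this identification, so does $\mathcal{E}_+$, hence the max-min rescales by $r$, the degenerate case following by $C^0$-continuity. For sublinearity I would fix $R$, an admissible $J$, and $k+l$ distinct points; choosing a near-optimal curve $u_1$ through the first $k$ points with $\mathcal{E}_+(u_1)\le c_k(Y,\lambda)+\epsilon$ and a near-optimal curve $u_2$ through the remaining $l$ points with $\mathcal{E}_+(u_2)\le c_l(Y,\lambda)+\epsilon$, their (possibly disconnected) union lies in the moduli space for all $k+l$ points and satisfies $\mathcal{E}_+(u_1\cup u_2)=\mathcal{A}(\alpha_+^{(1)})+\mathcal{A}(\alpha_+^{(2)})\le c_k(Y,\lambda)+c_l(Y,\lambda)+2\epsilon$; since the inner infimum is bounded by this value, taking the supremum over all data gives the inequality.

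\emph{Spectrality} I would obtain from SFT/Gromov compactness together with discreteness of the action spectrum of a nondegenerate $\lambda$. Taking a maximizing sequence of data whose inner infima converge to $c_k(Y,\lambda)$, and curves realizing these infima with uniformly bounded energy, compactness yields a limiting holomorphic building whose top-level positive asymptotic orbit set $\alpha$ satisfies $\mathcal{A}(\alpha)=c_k(Y,\lambda)$; as $\mathcal{A}(\alpha)$ lies in the discrete action spectrum this pins $c_k$ to a spectral value, and the degenerate case follows by continuity. Properties \emph{(Liouville Domains)} and \emph{(Sphere)} I would reduce to the theory of ECH and symplectic capacities. For a Liouville filling $(X,\omega)$ of $(Y,\lambda)$, pulling back admissible $J$'s and curves realizes every competitor for $c_k(Y,\lambda)$ inside the completion of $X$, giving $c_k(Y,\lambda)\le c_k^{Alt}(X,\omega)$, while $c_k^{Alt}\le c_k^{ECH}$ is the known comparison between the alternative (symplectic-homology-type) capacities and ECH capacities. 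The sphere formula then follows by taking $X=B^4$: the upper bound is the computed value $c_k^{ECH}(S^3,\lambda_0)=d$ for the ball, and the matching lower bound I would get from an explicit family of $J$-holomorphic curves modeled on degree-$d$ algebraic curves and ECH intersection theory (adjunction and the writhe bound), the point being that a curve through $k$ generic points must have degree at least $d$—and therefore positive asymptotic action at least $d$—because degree-$(d-1)$ curves pass through at most $(d-1)(d+2)/2<k$ generic points whenever $d^2+d\le 2k$.

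For the \emph{Asymptotics} (the Weyl law) I would follow the volume-packing strategy used for ECH capacities. Monotonicity of $c_k$ under exact symplectic embeddings—immediate from the definition, since an embedding pulls back almost complex structures and curves—combined with Conformality and the Sphere computation gives, for any symplectic packing of the relevant region by balls, a lower bound on $\liminf_k c_k(Y,\lambda)^2/k$ approaching $2\,\mathrm{Vol}(Y,\lambda)$ as the packing exhausts the volume; a matching upper bound on $\limsup_k c_k(Y,\lambda)^2/k$ comes from a reverse estimate bounding the number of marked points a curve of given action can carry. The analytic core here is the volume identity with $\mathrm{Vol}(Y,\lambda)=\tfrac12\int_Y\lambda\wedge d\lambda$, which for ECH capacities rests on Taubes' Seiberg--Witten input; I would either invoke the ECH Weyl law through the comparison in property (4) where a filling exists, or reproduce the volume estimate directly.

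The remaining property, the \emph{Spectral Gap Closing Bound}, is the one I expect to be the main obstacle, as it is the analytic heart of the construction and the source of the dynamical applications. Given a positive deformation $\{\lambda_t\}$ supported in $U$ with $\mathrm{Width}(\lambda_t)>\delta$ and $c_k(Y,\lambda)\le L$, the goal is to produce, for some $t$, a Reeb orbit of $\lambda_t$ through $U$ of action at most $L$ whenever $\delta>c_k(Y,\lambda)-c_{k-1}(Y,\lambda)$. I would place the $k$-th marked point inside the embedded ball furnishing the width and the remaining $k-1$ points outside $U$: a near-optimal curve through all $k$ points has upper energy at most $c_k(Y,\lambda)$, while forgetting the point in $U$ leaves a curve through $k-1$ points of energy at least $c_{k-1}(Y,\lambda)$; the passage through the ball of capacity exceeding $\delta$ then forces, via a monotonicity/neck-stretching argument and the energy bookkeeping $c_k(Y,\lambda)-c_{k-1}(Y,\lambda)<\delta$, the appearance of a low-action orbit meeting $U$. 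Making this degeneration argument rigorous—controlling the limiting building and extracting the orbit intersecting $U$ with the stated action bound—is the delicate step, and is precisely the content that parallels, at the level of these elementary curves, Theorem \ref{closelech} for the ECH spectral invariants.
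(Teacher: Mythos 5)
The paper does not prove this theorem at all: it is imported verbatim from Hutchings, with the proofs cited as \cite[Theorems 1.14 and 1.19]{hutchings2022elementary} and \cite[Theorem 12]{hutchings2022elementary2}. So the relevant comparison is with Hutchings' arguments, and against those your sketches of properties (1)--(5) are essentially faithful: conformality by rescaling the symplectization coordinate, sublinearity by taking unions of near-optimal curves (valid because the moduli spaces in the definition allow disconnected curves; this is exactly why disconnectedness is built in), spectrality by compactness plus discreteness of the nondegenerate action spectrum, the Liouville-domain chain of inequalities, and the sphere computation with the ECH upper bound and the algebraic-curve point count $(d-1)(d+2)/2 < k$ for the lower bound, which is the correct mechanism.

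There are, however, two genuine gaps. First, for the \emph{Asymptotics} upper bound, your proposed ``reverse estimate bounding the number of marked points a curve of given action can carry'' does not exist as an elementary argument, and your fallback through property (4) requires a Liouville filling, whereas the theorem is asserted for \emph{every} closed contact three-manifold, including non-fillable (e.g.\ overtwisted) ones. The actual proof uses the inequality $c_k(Y,\lambda) \le \inf\{c^{ECH}_\sigma(Y,\lambda) \mid U^k\sigma \neq 0\}$ (\cite[Theorem 6.1]{hutchings2022elementary}, quoted later in this paper in the proof of Theorem \ref{c2lp1}), combined with the Cristofaro-Gardiner--Hutchings--Ramos Weyl law for ECH spectral invariants along $U$-sequences; the Seiberg--Witten input there cannot be ``reproduced directly.'' Second, for the \emph{Spectral Gap Closing Bound}, your plan to extract a low-action orbit through $U$ directly from a neck-stretching degeneration does not close: the orbits of a limit building need not meet $U$ nor satisfy the action bound. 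The actual argument is by contradiction and hinges on a step your sketch omits entirely: if no $\lambda_t$ has a Reeb orbit through $U$ of action $\le L$, then the portions of the action spectra of the $\lambda_t$ below $L$ coincide with that of $\lambda$ (orbits avoiding $U$ are unchanged since $f_t$ vanishes there), so spectrality plus continuity in $t$ forces $c_k(\lambda_t)$ to be \emph{constant} along the deformation; meanwhile monotonicity together with the embedded ball of capacity exceeding $c_k(Y,\lambda)-c_{k-1}(Y,\lambda)$ inside $M_{\lambda_1}$ yields $c_k(Y,\lambda_1) \ge c_{k-1}(Y,\lambda) + \mathrm{Width}(\lambda_t) > c_k(Y,\lambda)$, the desired contradiction. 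Without the constancy-along-the-deformation argument there is no mechanism producing the orbit, so as written this part of your proposal would fail.
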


Most of these properties are stated and proven in \cite[Theorem 1.14]{hutchings2022elementary}. The Asymptotics property is given in \cite[Theorem 1.19]{hutchings2022elementary} and the Liouville Domains property is a combination of the corresponding property in \cite[Theorem 1.14]{hutchings2022elementary} and \cite[Theorem 12]{hutchings2022elementary2}.

We are now ready to prove Theorem \ref{besse}.
\subsection{Characterization of Besse contact forms}
\begin{proof}[Proof of Theorem \ref{besse}]
    Assume that $c_{k}(Y,\lambda) = c_{k-1}(Y,\lambda)=L_0$ for some integer $k$. Then from the spectral gap property we have $\mathrm{Close}^{L_0}(Y,\lambda)= 0$, and from Lemma \ref{closel=0}, we conclude that $(Y,\lambda)$ is Besse.
    
    Now, we assume that $(Y,\lambda)$ is Besse. From Wadsley's Theorem \cite{wadsley1975geodesic}, there exists $\widetilde{T}$ positive such that for every orbit $\gamma$ of the Reeb flow $\varphi^t_{\lambda}$ has minimal period $\widetilde{T}/k_{\gamma}$, for some $k_{\gamma} \in \mathbb{Z}_{>0} =\{1,2,...\}$. Since $Y$ is compact and the Reeb vector field is nowhere vanishing, there is a positive number $\mathcal{A}_{min}(\lambda)$ that bounds from below the period of any closed orbit. Therefore, since $\Tilde{T}/\mathcal{A}_{min}(\lambda) \geq k_{\gamma}$ for any $\gamma$ Reeb orbit, the set 
    $$K=\{k_{\gamma}  \mid \  \gamma \text{ is a Reeb orbit }\},$$
    is bounded from above. For $m$ the minimal common multiple of the elements in $K$, we can conclude that any Reeb orbit has period a multiple of $T = \Tilde{T}/m$. Therefore, from the spectrality property in \ref{spectrality}, any $c_{k}(Y,\lambda)$ is an element of the set $\{nT \ \mid \ n \in \mathbb{Z}_{> 0}\}$. 

    If we assume that $c_{k+1}(Y,\lambda)>c_k(Y,\lambda)$ for all $k \geq 0$, then $c_{k+1}(Y,\lambda)\geq c_k(Y,\lambda) + T$, and therefore, $c_{k}(Y,\lambda)\geq kT$, for any $k \geq 0$, but this implies 
    $$\lim_{k \rightarrow \infty} c_{k}^2(Y,\lambda)/k \geq \lim_{k\rightarrow \infty} kT^2 =\infty,$$
    which contradicts the asymptotic property of $c_{k}(Y,\lambda)$ in \ref{asymptotic}.
\end{proof}

\section{Zoll contact forms}

\subsection{Prequantization bundles}\label{sec:preqb}
Let $Y$ be a three-dimensional closed manifold, and let $\lambda$ be a contact form defined on $Y$ that is Zoll. For $S^1 = \R / (\mathcal{A}_{\min}(\lambda)\Z)$, we have a natural smooth $S^1$-action
\begin{align}\label{s1action}
S^1 \times Y &\to Y \nonumber \\
(\theta, y) &\mapsto \theta \cdot y := \varphi_\theta(y),
\end{align}
where $\varphi \colon \R \times Y \to Y$ denotes the Reeb flow on $(Y,\lambda)$. Since the action \eqref{s1action} is free, the orbit space $Y/S^1 =: \Sigma$ is a smooth compact surface. In fact, we have a $S^1$-principal bundle (see \cite[Theorem 2]{boothby1958contact}).
$$\begin{tikzcd}
 \frac{\R}{\mathcal{A}_{\min}(\lambda) \Z}=S^1\arrow{r} &Y \arrow{d}{\mathfrak{q}} \\ &\Sigma
\end{tikzcd}$$
Consider the set
$$Y \times_{S^1} \C = \{[y,z] \in Y \times \C \mid (y,z) \sim (\theta \cdot y, e^{i\theta}z) \ \text{for some $\theta$ in $S^1$}\}.$$
Then, the canonical projection $Y \times_{S^1} \C \to \Sigma,\ [y,z] \mapsto \mathfrak{q}(y),$ is a complex line bundle. We denote it by $L = Y \times_{S^1} \C$. One can choose a Hermitian metric on $L$ such that the corresponding unit sphere bundle coincides with $Y \cong Y \times \{0\} \subset L$. In particular, $Y$ is the boundary of a Liouville domain. In this case, we have
$$c_1(L) = c_1(Y) \in H^2(\Sigma;\Z).$$

Since $TY = \langle R \rangle \oplus \ker \lambda$, we have a natural bundle isomorphism $\ker \lambda \cong T\Sigma$. In particular, there exists a unique $\omega \in \Omega^2(\Sigma)$ such that $\mathfrak{q}^*\omega = d\lambda$. It turns out that $\omega$ is a symplectic form, provided that $d\lambda\vert_{\ker \lambda}$ is nondegenerate. Therefore, the orbit space $\Sigma$ is an orientable compact surface. From now on, we denote its genus by $g$. Moreover, the cohomology class $-[\omega]/\mathcal{A}_{\min}(\lambda)$ is integral and coincides with the first Chern class $c_1(L) \in H^2(\Sigma;\Z)$; see, e.g., \cite[\S 2]{kobayashi1956principal} or \cite[Theorem 2]{boothby1958contact}.

Using Gysin sequence or spectral sequence for the bundle $S^1 \hookrightarrow Y \rightarrow \Sigma$ one can compute the homology of the total space:
\begin{equation}
H_*(Y;\Z) = 
\begin{cases}
\Z, \quad *=3, \\
\Z^{2g}, \quad *=2 \\
\Z^{2g} \oplus \Z_{-c_1}, \quad *= 1, \\
\Z, \quad *=0,
\end{cases}
\end{equation} 
where $c_1 = c_1(Y)[\Sigma] \in \Z$ denotes the Chern number of the circle bundle $\mathfrak{q}\colon Y \to \Sigma$. Moreover, given a point $p \in \Sigma$, the fiber $\mathfrak{q}^{-1}(p)$ over $p$ is such that its $k$-fold cover represents the class $k \mod (-c_1)$ in the $\Z_{-c_1}$ summand of $H_1(Y;\Z)$, see e.g. \cite[\S 3.2]{nelson2020embedded}. 

Note also that we have the following diffeomorphism:
\begin{align}\label{symplectization}
F \colon \R \times Y &\to L^* := L \setminus 0_L \nonumber \\ (r,y) &\mapsto [y,e^r],
\end{align}
where $0_L \subset L$ denotes the zero section. Therefore, we can identify the symplectization $(\R \times Y, d(e^r\lambda))$ with $(L^*,(F^{-1})^*(d(e^r\lambda)))$.

\subsection{Meromorphic sections and pseudoholomorphic curves}\label{sec:meromorphic}

Given any almost complex structure $J$ on the symplectization $\R \times Y$, we obtain a corresponding almost complex structure on $L^*$ via the identification $F$ in \eqref{symplectization}. If $J$ is $\R$-invariant, the latter induces an integrable complex structure $j$ on $\Sigma \cong 0_L \subset L$ such that the projection $\pi:=[y,z] \mapsto \mathfrak{q}(y)$ is a holomorphic map. Hence, $L \to \Sigma$ can be seen as a holomorphic line bundle over the compact Riemann surface $(\Sigma,j)$. 

From now on, we focus on special pseudoholomorphic curves whose domain is $\Sigma$ with some punctures, i.e., smooth maps
\begin{equation}\label{specialpseudo}
u \colon \Sigma \setminus \Gamma \to \R \times Y \cong L^*,
\end{equation}
such that $du \circ j = J \circ du$ and $\Gamma = \{p_1^-,\ldots,p_l^-,p_1^+,\ldots,p_m^+\} \subset \Sigma$ is a set of points regarded as punctures.

Note that, through the identification \eqref{symplectization}, positive and negative ends of $u$ correspond precisely to poles and zeros of the composition $F \circ u$, respectively. In particular, a meromorphic section $s \colon \Sigma \to L$ can be interpreted as a pseudoholomorphic curve of the form \eqref{specialpseudo}. Further, since such a meromorphic section is a proper map, $J$-holomorphic curves of this type have finite Hofer energy, see \cite[Lemma 3.8]{bourgeois2002morse}.

We recall that the degree $\deg(L)$ of the holomorphic line bundle $L$ agrees with the Chern number $c_1(L)[\Sigma] \in \Z$. The following two lemmas are based on the theory of meromorphic sections of holomorphic line bundles over compact Riemann surfaces and are useful to estimate $c_k$ in the Zoll case.

\begin{lemma}\label{numberofpoles}
    Let $\pi \colon L \to \Sigma$ be a holomorphic line bundle over a compact Riemann surface $\Sigma$ with degree $\deg(L) = -d, d\geq 1$. Then, any meromorphic section $s\colon \Sigma \to L$ of $\pi \colon L \to \Sigma$ must have (counting multiplicity) at least $d$ poles. In particular, $\pi\colon L \to \Sigma$ does not admit a meromorphic section with total multiplicity of poles $<d$.
    \begin{proof}
        If $s\colon \Sigma \to L$ is a nonzero meromorphic section of $\pi \colon L \to \Sigma$, we have $\mathrm{div}(s) = Z - P$, where $Z$ and $P$ are the effective divisors of zeros and poles of $s$, respectively. In this case, we have
        $$-d = \deg(L) = \deg(\mathrm{div}(s)) = \deg(Z) - \deg(P).$$
        Therefore, $\deg(P) = \deg(Z) + d \geq d.$
    \end{proof}
\end{lemma}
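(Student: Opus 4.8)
The plan is to translate the statement about poles into the language of divisors on the compact Riemann surface $\Sigma$ and to invoke the fundamental fact that the degree of the divisor of a meromorphic section records the degree of the underlying line bundle. First I would recall that, since $\Sigma$ is a compact Riemann surface, the holomorphic line bundle $\pi \colon L \to \Sigma$ admits at least one nonzero global meromorphic section $s$; this is automatic because every holomorphic line bundle over a projective curve has such a section. Associated to $s$ is its divisor $\mathrm{div}(s) \in \mathrm{Div}(\Sigma)$, which I would write as $\mathrm{div}(s) = Z - P$, where $Z$ and $P$ are the effective divisors recording, with multiplicity, the zeros and poles of $s$ respectively.

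The key step is the identity $\deg(\mathrm{div}(s)) = \deg(L)$. I would justify this by recalling that any two meromorphic sections of the same line bundle differ by a global meromorphic function $f$ on $\Sigma$, and that the divisor $\mathrm{div}(f)$ of such a function is principal, hence has degree zero. Thus $\deg(\mathrm{div}(s))$ is independent of the chosen section and equals the intrinsic invariant $\deg(L) = c_1(L)[\Sigma] = -d$.

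With this in hand the conclusion is immediate: from $\mathrm{div}(s) = Z - P$ one gets $\deg(Z) - \deg(P) = \deg(\mathrm{div}(s)) = -d$, so that $\deg(P) = \deg(Z) + d$. Since $P$ is effective we have $\deg(Z) \geq 0$, whence $\deg(P) \geq d$. As $\deg(P)$ is precisely the total multiplicity of poles of $s$, this shows that any meromorphic section has at least $d$ poles counted with multiplicity, and in particular none can have total pole multiplicity strictly less than $d$.

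The main obstacle is the degree identity $\deg(\mathrm{div}(s)) = \deg(L)$, which is the one genuinely nontrivial ingredient; it rests on the vanishing of the degree of a principal divisor, itself a global consequence of the argument principle (equivalently, of the fact that a nonconstant meromorphic function on a compact Riemann surface attains every value the same number of times). Everything else is bookkeeping with effective divisors and the nonnegativity of their degrees.
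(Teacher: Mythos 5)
Your proposal is correct and follows essentially the same route as the paper's proof: writing $\mathrm{div}(s) = Z - P$ with $Z,P$ effective, using $\deg(\mathrm{div}(s)) = \deg(L) = -d$, and concluding $\deg(P) = \deg(Z) + d \geq d$. The only difference is that you additionally justify the degree identity via principal divisors having degree zero (and include an unneeded remark on the existence of sections), whereas the paper simply cites the identity; this is a matter of level of detail, not of approach.
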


\begin{lemma}\label{existsasection}
Let $\pi \colon L \to S^2$ be a holomorphic line bundle over the sphere with degree $\deg(L) = -d, d\geq 1$ and $v_1,v_2 \in L$ be two points in different fibers. Then, for $k> d + 1$, there exists a meromorphic section $s\colon S^2 \to L$ with total multiplicity of poles given by $k$ and such that $s(x_i) = v_i, \ i=1,2$. 
\begin{proof}
From Riemann--Roch Theorem, we have
    $$h^0(E) - h^1(E) = \deg(E) -g + 1,$$
    for any holomorphic line bundle $E \to \Sigma_g$, where $g$ is the genus of the compact Riemann surface $\Sigma_g$, $h^0(E) = \dim H^0(S^2,E) $ and $h^1 = \dim H^1(S^2,E)$. Let $P$ be an effective divisor with degree $k>d + 1$. Note that a meromorphic section of $L$ with poles in $P$ is equivalent to a holomorphic section of the bundle $L \otimes \mathcal{O}(P) \cong \mathcal{O}(k-d)$. In this case, we have $g = 0$ and 
    $$h^1(L \otimes \mathcal{O}(P)) = h^0(S^2, K \otimes (L \otimes \mathcal{O}(P))^{-1}) = h^0(S^2, \mathcal{O}(-2+d-k)) = 0$$
    by Serre duality for $k-d+2 \geq 0$. Therefore,
    \begin{align}\label{dimensionh0}
        h^0(L \otimes \mathcal{O}(P)) &= \deg(L \otimes \mathcal{O}(P)) + 1 \nonumber \\ &= k - d + 1
    \end{align}
    Now, identifying the space $H^0(S^2,L \otimes \mathcal{O}(P))$ of holomorphic sections of the bundle $L \otimes \mathcal{O}(P)$ with the space of meromorphic sections of $L$ with poles in $P$, we consider the evaluation map
    \begin{align*}
        ev_{x_1,x_2} \colon H^0(S^2,L \otimes \mathcal{O}(P)) &\to L_{x_1} \oplus L_{x_2}
        \\ s &\mapsto (s(x_1)-v_1,s(x_2)-v_2).
    \end{align*}
Note that by \eqref{dimensionh0}, the kernel of this map has dimension given by
\begin{equation}
h^0(S^2,L \otimes \mathcal{O}(P)) - (\dim L_{x_1} + \dim L_{x_2})= k - d -1.
\end{equation}
Therefore, if $k>d + 1$, there must exist a meromorphic section $s$ of $L$ with poles in $P$ such that $s(x_i) = v_i$, $i =1,2$.
\end{proof}
\end{lemma}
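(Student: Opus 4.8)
The plan is to reduce the construction to an interpolation problem for holomorphic sections on $S^2 \cong \mathbb{P}^1$, which I would solve with Riemann--Roch and Serre duality. Write $x_i = \pi(v_i) \in S^2$, so that $x_1 \neq x_2$ by the hypothesis that $v_1,v_2$ lie in different fibers. First I would fix a \emph{reduced} effective divisor $P = p_1 + \cdots + p_k$ of degree $k$ whose support is disjoint from $\{x_1, x_2\}$. Following the identification already used in the excerpt, meromorphic sections of $L$ with poles bounded by $P$ correspond to holomorphic sections of $L \otimes \mathcal{O}(P)$; and since every holomorphic line bundle over $\mathbb{P}^1$ is an $\mathcal{O}(n)$, I would identify $L \otimes \mathcal{O}(P) \cong \mathcal{O}(k-d)$ using $\deg L = -d$.

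The core computation is a dimension count. For $k > d+1$ one has $k - d \geq 2$, so Serre duality gives $h^1(L \otimes \mathcal{O}(P)) = h^0(\mathcal{O}(d - k - 2)) = 0$, and Riemann--Roch yields $h^0(L \otimes \mathcal{O}(P)) = k - d + 1$. The key step is then the surjectivity of the two-point evaluation map
\[
ev \colon H^0(S^2, L \otimes \mathcal{O}(P)) \to L_{x_1} \oplus L_{x_2}, \qquad s \mapsto (s(x_1), s(x_2)),
\]
whose kernel is $H^0\!\big(L \otimes \mathcal{O}(P - x_1 - x_2)\big) \cong H^0(\mathcal{O}(k - d - 2))$ of dimension $k - d - 1$. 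Comparing dimensions, the image has dimension $(k-d+1)-(k-d-1) = 2 = \dim_{\C}(L_{x_1} \oplus L_{x_2})$, so $ev$ is onto and there exists $s$ with $(s(x_1), s(x_2)) = (v_1, v_2)$, realizing the prescribed values.

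Finally, I would upgrade ``poles bounded by $P$'' to ``poles of total multiplicity exactly $k$.'' The fiber $A := ev^{-1}(v_1, v_2)$ is an affine space of dimension $k - d - 1 \geq 1$ (here the hypothesis $k > d+1$ is used). I claim a generic $s \in A$ is nonvanishing at every $p_j$, hence has pole divisor exactly $P$. Indeed, for a fixed $j$ the three-point evaluation at $x_1, x_2, p_j$ is again surjective---its kernel is $H^0(\mathcal{O}(k-d-3))$ and the dimension count gives image dimension $3$---so $A$ is not contained in the hyperplane $\{s(p_j) = 0\}$. Thus the locus of sections in $A$ vanishing at some $p_j$ is a finite union of proper affine subspaces, and any $s$ in its (nonempty) complement has the required pole divisor $P$ of degree $k$ while still satisfying $s(x_i) = v_i$. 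I expect this last point---promoting the pole bound to an exact multiplicity---to be the main subtlety, since the bare Riemann--Roch argument only controls poles from above; it is resolved precisely by the extra freedom $\dim A \geq 1$ that the hypothesis $k > d+1$ provides.
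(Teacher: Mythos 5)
Your proposal is correct and follows the same basic route as the paper's proof: trade meromorphic sections of $L$ with poles along $P$ for holomorphic sections of $L\otimes\mathcal{O}(P)\cong\mathcal{O}(k-d)$, compute $h^0=k-d+1$ via Riemann--Roch and Serre duality, and conclude with a two-point evaluation map. However, your write-up is more careful than the paper's at two points, and both refinements address genuine gaps. First, the paper considers the affine map $s\mapsto(s(x_1)-v_1,s(x_2)-v_2)$ and asserts that its zero locus has dimension $k-d-1$; strictly speaking, that dimension count presupposes what must be proved, namely that $(v_1,v_2)$ lies in the image of the linear evaluation $s\mapsto(s(x_1),s(x_2))$ --- an affine system can be empty no matter how favorable the expected dimension is. You close this by identifying the kernel of the linear evaluation with $H^0(\mathcal{O}(k-d-2))$, of dimension exactly $k-d-1$, which forces the image to be all of $L_{x_1}\oplus L_{x_2}$. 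Second, the lemma claims poles of total multiplicity \emph{exactly} $k$, whereas the correspondence with $H^0(L\otimes\mathcal{O}(P))$ only bounds the pole divisor from above by $P$; your genericity step (surjectivity of the three-point evaluation, so the fiber $A$ is not contained in any subspace $\{s(p_j)=0\}$, and a finite union of proper affine subspaces cannot exhaust $A$) upgrades the bound to equality, and your choice of $P$ reduced with support disjoint from $\{x_1,x_2\}$ is also needed for the evaluation conditions to translate correctly to the meromorphic side. The paper's proof passes over both of these points in silence, so your version is the one I would keep.
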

Given a $J$-holomorphic curve $u \colon \Sigma\setminus \Gamma \to \R \times Y$, under the identification $F$ in \eqref{symplectization}, we obtain a meromorphic map $\overline{u} \colon \Sigma \to L$ such that the positive punctures in $\Gamma$ are poles of $\overline{u}$ and negative punctures in $\Gamma$ are zeros of $\overline{u}$. The following Lemma gives a criterion to ensure whether $u$ comes from a meromorphic section of the complex line bundle $\pi \colon L \to \Sigma$.

\begin{lemma}[cf. Exercise 4.5 in \cite{hutchings2014lecture}]\label{oncesection}
    Let $u \colon \Sigma\setminus \Gamma \to \R \times Y$ be a $J$-holomorphic curve, where $J$ is a $\R$-invariant almost complex structure on $\R \times Y$. Suppose that the image of the map $F \circ u \colon \Sigma\setminus \Gamma \to L^* = L \setminus 0_L$ intersects each fiber of the holomorphic line bundle $\pi \colon L^* \to \Sigma$ once and transversely except from the fibers over the asymptotic Reeb orbits. Then $u$ comes from a meromorphic section $s_u \colon \Sigma \to L$.
    \begin{proof}
        Let $u\colon \Sigma \setminus \Gamma \to \R \times Y$ be such a $J$-holomorphic curve. Then, for $x \in \Sigma$ not coming from an asymptotic Reeb orbit, we have $\# (F(u(\Sigma \setminus \Gamma) \cap L_x) = 1$, where $L_x = \pi^{-1}(x)$ is the fiber over $x$. Say $F(u(\Sigma \setminus \Gamma) \cap L_x = \{v_x\}$. It is clear that $s_u \colon \Sigma \to L$ defined by $s_u(x) = v_x$, for $x \in \Sigma \setminus \Gamma$, and extending naturally to its zeros and poles following the asymptotics of $u$, is a section of the line bundle $\pi\colon L \to \Sigma$. We claim that $s_u$ is a meromorphic section. To prove this, we note that $f := \pi \circ F \circ u \colon \Sigma \setminus \Gamma \to \Sigma$ is a holomorphic map. Since $F \circ u$ intersects each fiber of $\pi \colon L^* \to \Sigma$ once and transversely, the holomorphic map $f$ is injective and hence is a biholomorphism over its image. In this case, the section $s_u$ coincides with the holomorphic map $F \circ u \circ f^{-1}$ outside its zeros and poles, and hence, $s_u$ is a meromorphic section.
        
$$\begin{tikzcd}
\Sigma \setminus \Gamma \arrow[r, "F \circ u"] \arrow[rd, "f"'] & L \arrow[d] \\
& \Sigma \arrow[u, bend right, dotted, "s_u"']
\end{tikzcd}$$
    \end{proof}
\end{lemma}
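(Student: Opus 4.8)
The plan is to build the section $s_u$ as the pointwise fiberwise intersection furnished by the hypothesis, recognize it as a composite of holomorphic maps away from the punctures, and then extend it meromorphically across the punctures using the asymptotic behavior of $u$. The whole argument hinges on promoting the set-theoretic assignment $x \mapsto v_x$ (where $\{v_x\} = (F\circ u)(\Sigma\setminus\Gamma)\cap L_x$) to an honest holomorphic, then meromorphic, section.

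First I would set $f := \pi \circ F \circ u \colon \Sigma\setminus\Gamma \to \Sigma$. Since $j$ is chosen so that $\pi$ is holomorphic and $u$ is $J$-holomorphic, both $F\circ u$ and $f$ are holomorphic. The hypothesis that $F\circ u$ meets each fiber $L_x$ exactly once and transversely, for every $x$ not lying over an asymptotic Reeb orbit, says precisely that $(F\circ u)^{-1}(L_x)$ is a single point and that the tangent space to the image is complementary to $T L_x$; hence $f$ is injective there and $df$ is surjective. As the domain and target are complex curves, $df$ is in fact an isomorphism, so $f$ is an injective holomorphic immersion and therefore a biholomorphism onto the open set $\Sigma\setminus B$, where $B\subset\Sigma$ is the finite set of base points of the asymptotic orbits. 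I can then define $s_u := (F\circ u)\circ f^{-1}$ on $\Sigma\setminus B$; by construction $\pi\circ s_u = \mathrm{id}$, so $s_u$ is a holomorphic section of $L$ over $\Sigma\setminus B$ taking the value $v_x$ at each $x$.

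The remaining and genuinely delicate step is to extend $s_u$ meromorphically across each $b\in B$. Here I would invoke the finite-Hofer-energy asymptotics recalled in Remark \ref{rmkbourg}: near a positive puncture $u$ escapes to $+\infty$ in the $\R$-direction, so under $F$ the curve runs off to infinity inside the fiber $L_b$, forcing a pole; near a negative puncture $u$ runs to $-\infty$, so $F\circ u$ approaches the zero section $0_L$, forcing a zero. Working in a holomorphic coordinate centered at $b$ together with a local holomorphic trivialization of $L$, the section $s_u$ becomes a holomorphic function on a punctured disk whose growth is controlled (bounded away from $0$ in the pole case, bounded in the zero case), so Riemann's removable-singularity theorem and its meromorphic counterpart yield a meromorphic extension at $b$ with a finite-order pole or zero. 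Carrying this out at every point of $B$ produces the desired global meromorphic section $s_u\colon\Sigma\to L$, with poles at the positive punctures and zeros at the negative ones.

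I expect this last extension to be the main obstacle, since everything before it is formal once $f$ is seen to be a biholomorphism onto its image; the subtle point is ruling out an essential singularity and matching the order of the pole or zero to the asymptotic winding and covering multiplicity of the limiting orbit, which I would control using the asymptotic analysis of finite-energy curves together with the homological identification of covered fibers recorded in Section \ref{sec:preqb}.
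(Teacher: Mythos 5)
Your proposal is correct and takes essentially the same route as the paper's proof: both introduce $f = \pi \circ F \circ u$, use the once-and-transverse fiber hypothesis to conclude that $f$ is an injective holomorphic immersion, hence a biholomorphism onto its image, and then recover the section as $s_u = (F\circ u)\circ f^{-1}$. The only difference is that you carefully justify the meromorphic extension across the punctures (via the finite-energy asymptotics and the removable singularity/meromorphic extension theorems), a step the paper compresses into ``extending naturally to its zeros and poles following the asymptotics of $u$.''
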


\subsection{ECH spectral invariants for Zoll $L( p ,1)$}
Let $\lambda$ be a contact form on the Lens space $L( p,1)=: Y$. In this case, we must have $g=0$ and the circle bundle $\mathfrak{q} \colon Y \to S^2$ is such that $c_1 = - p$. We follow a Bourgeois approach introduced in \cite{bourgeois2002morse}, see also \cite{nelson2020embedded} and \cite{ferreira2022symplectic}. Given a perfect Morse function $f\colon S^2 \to \R$, we consider the perturbed contact form
$$\lambda_\varepsilon:= (1+\varepsilon \mathfrak{q}^*f)\lambda.$$
Note that $\ker \lambda_\varepsilon = \ker \lambda = \xi$ for every $\varepsilon>0$. Recalling the splitting $TY = \langle R \rangle \oplus \xi$, where $R$ is the Reeb vector field, one can understand the periodic orbits of the new Reeb vector field for a fixed action range.

\begin{lemma}[Lemma 2.3 in \cite{bourgeois2002morse}]\label{critpts}
    For each $T>0$, there exists $\varepsilon = \varepsilon(T)>0$ such that the periodic orbits of $R_{\lambda_\varepsilon}$ in $Y$ with action lower than or equal to $T$ are nondegenerate and correspond to the critical points of $f$.
\end{lemma}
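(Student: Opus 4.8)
The plan is to follow the Morse--Bott perturbation scheme of Bourgeois. \textbf{First}, I would compute the Reeb vector field $R_\varepsilon$ of $\lambda_\varepsilon = g\lambda$, where $g := 1 + \varepsilon\,\mathfrak{q}^* f > 0$ for $\varepsilon$ small. Writing $R_\varepsilon = aR + Y$ with $Y \in \xi = \ker\lambda$ and using $d\lambda_\varepsilon = dg\wedge\lambda + g\,d\lambda$, the normalization $\lambda_\varepsilon(R_\varepsilon)=1$ gives $a = 1/g$. The crucial simplification is that $f$ is pulled back from the base, so $\mathfrak{q}^* f$, and hence $g$, is constant along the fibers; since the fibers are exactly the Reeb orbits of $\lambda$, we get $dg(R)=0$, and feeding $R$ into $\iota_{R_\varepsilon}d\lambda_\varepsilon = 0$ forces $dg(Y)=0$ as well. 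What remains of $\iota_{R_\varepsilon}d\lambda_\varepsilon=0$ is $\iota_Y d\lambda = \tfrac{1}{g^2}\,dg$, which determines $Y\in\xi$ uniquely because $d\lambda|_\xi$ is nondegenerate. Thus
\[
R_\varepsilon = \frac{1}{g}\,(R + Y),\qquad \iota_Y d\lambda\big|_\xi = \frac{\varepsilon}{g^2}\,\mathfrak{q}^* df\big|_\xi,
\]
so in particular $Y = O(\varepsilon)$ and $R_\varepsilon(g) = 0$, i.e. $f\circ\mathfrak{q}$ is conserved by the new flow.

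\textbf{Next}, I would project the dynamics to $\Sigma$. Since $\lambda$ and $g$ are $S^1$-invariant, $R_\varepsilon$ is $S^1$-invariant and descends to a vector field $\overline{X}_\varepsilon$ on $\Sigma$; the $R$-component drops out, and under the isomorphism $d\mathfrak{q}\colon \xi \to T\Sigma$ together with $\mathfrak{q}^*\omega = d\lambda$ one identifies
\[
\overline{X}_\varepsilon = \frac{\varepsilon}{g^3}\,X_f,\qquad \iota_{X_f}\omega = df,
\]
i.e. $\overline{X}_\varepsilon$ is a positive reparametrization of the Hamiltonian vector field of $f$ on $(\Sigma,\omega)$, whose zeros are precisely the critical points of $f$. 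Because $\lambda_\varepsilon(R_\varepsilon)=1$, the action of any closed $R_\varepsilon$-orbit equals its period, so an orbit of action $\le T$ has period $\le T$, and hence so does its projection $\overline{\gamma} = \mathfrak{q}\circ\gamma$. Now $X_f$ is a fixed smooth vector field on the compact surface $\Sigma$, so its nonconstant periodic orbits have periods bounded below by some $\delta = \delta(f) > 0$ (by a Gronwall estimate the time-$t$ flow has no nonconstant fixed points for small $t$; near the nondegenerate critical points this period is the finite, nonzero linearized period). Since $g\to 1$ uniformly, reparametrizing by $\varepsilon/g^3$ multiplies periods by a factor $\ge c/\varepsilon$, so choosing $\varepsilon$ small enough that $c\delta/\varepsilon > T$ rules out nonconstant projected orbits of period $\le T$. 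Therefore $\overline{\gamma}$ is constant, equal to a zero of $\overline{X}_\varepsilon$, i.e. a critical point $p$ of $f$, and $\gamma$ together with its iterates is a cover of the fiber $\mathfrak{q}^{-1}(p)$, on which $R_\varepsilon = \tfrac{1}{1+\varepsilon f(p)}R$. This establishes the correspondence with critical points; there are finitely many such orbits of action $\le T$ since $f$ has finitely many critical points and the covering multiplicities are bounded by $T/\mathcal{A}_{\min}(\lambda)$.

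\textbf{Finally}, I would check nondegeneracy, which I expect to be the main obstacle. Write $\tau = \mathcal{A}_{\min}(\lambda)$. The linearized return map of $R_\varepsilon$ along the fiber over a critical point $p$, restricted to the contact plane $\xi$, is conjugate via $d\mathfrak{q}$ to the linearized time-$(1+\varepsilon f(p))\tau$ flow of $\overline{X}_\varepsilon$ at $p$, which equals $\exp\!\big(\varepsilon\tau\, A_p + O(\varepsilon^2)\big)$, where $A_p$ is the linearization of $X_f$ at $p$, namely the composition of $\omega$ with $\mathrm{Hess}_p f$. Since $p$ is a \emph{nondegenerate} critical point, $A_p$ is invertible with nonzero eigenvalues (real for saddles, purely imaginary for maxima and minima), so the eigenvalues of the return map are $\exp(\pm\varepsilon\tau\mu_p)$, respectively $\exp(\pm i\varepsilon\tau\mu_p)$, all of which differ from $1$ once $\varepsilon$ is small. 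Applying the same estimate to the $d$-fold covers, with $d \le T/\tau$ bounded and only finitely many critical values $\mu_p$ in play, shows that after shrinking $\varepsilon = \varepsilon(T)$ further none of these return maps has $1$ as an eigenvalue. Hence all $R_{\lambda_\varepsilon}$-orbits of action $\le T$ are nondegenerate and correspond to the critical points of $f$, as claimed. The delicate point is to make the two smallness requirements on $\varepsilon$ — killing short projected orbits and avoiding the eigenvalue $1$ — uniform over the finitely many orbits of action $\le T$, which is exactly where the bound $T$ and the nondegeneracy of $f$ enter.
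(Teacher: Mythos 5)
The paper does not actually prove this lemma: it is imported verbatim as Lemma 2.3 of Bourgeois's Morse--Bott paper \cite{bourgeois2002morse}, so there is no in-paper argument to compare against. Your blind proof is a correct, self-contained reconstruction of Bourgeois's perturbation argument, specialized to the prequantization setting the paper actually needs (where all of $Y$ fibers over $\Sigma$), and all three steps are sound: the computation of $R_\varepsilon$, the projection to the slow reparametrized Hamiltonian field $\overline{X}_\varepsilon$ on $\Sigma$ (which rules out nonconstant projected orbits of period $\leq T$ once $\varepsilon$ is small, forcing orbits of action $\leq T$ to be covers of fibers over critical points), and the identification of the linearized return map on $\xi$ with the exponential of the linearization of $\overline{X}_\varepsilon$ at the critical point, which yields nondegeneracy uniformly over the boundedly many covers $d \leq T/\tau$. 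Two cosmetic remarks. First, your powers of $g$ are slightly off: writing $R_\varepsilon = \tfrac{1}{g}R + Z$ with $Z \in \xi$, one gets $\iota_Z d\lambda\vert_\xi = \tfrac{\varepsilon}{g^2}\,\mathfrak{q}^*df\vert_\xi$ and hence $\overline{X}_\varepsilon = \tfrac{\varepsilon}{\bar{g}^2}X_f$ rather than $\tfrac{\varepsilon}{g^3}X_f$; since $g = 1 + O(\varepsilon)$ this affects nothing. Second, the lower bound $\delta(f)>0$ on the periods of nonconstant periodic orbits of the fixed field $X_f$ on a compact surface is exactly Yorke's theorem (period at least $2\pi$ divided by the Lipschitz constant), which is cleaner to invoke than a Gronwall estimate; the case of orbits near the nondegenerate zeros is covered by the same bound applied locally. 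Finally, note that in the paper's situation $f$ is a perfect Morse function on $S^2$, so only the elliptic (max/min) branch of your eigenvalue analysis occurs, consistent with the paper's subsequent claim that $\gamma_{p_1}$ and $\gamma_{p_2}$ are elliptic for $\lambda_\varepsilon$.
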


For a critical point $p$ of $f$, we denote the corresponding Reeb orbit and its iterations by $\gamma_p^k$, $k = 1,2,\ldots$. Bourgeois computed the Conley--Zehnder indices of these orbits for $\varepsilon>0$ as in the previous Lemma.

\begin{lemma}[Lemma 2.4 in \cite{bourgeois2002morse}]\label{lemmacz}
    Consider $T>0,\ \varepsilon>0$ as in the previous Lemma. Suppose that $k$ is a positive integer such that $k\mathcal{A_{\lambda_\varepsilon}}(\gamma_p)\leq T$. Then the Conley--Zehnder index of $\gamma_p^k$ as a Reeb orbit of $\lambda_\varepsilon$ is given by
    $$CZ_\tau(\gamma_p^k) = \mu^\tau(\gamma_p^k)-1+\mathrm{ind}_f(p),$$
    where $\tau$ is a trivialization $\tau$ of $\xi$ along $\gamma_p$, $\mu^\tau(\gamma_p^k)$ is the Robbin--Salamon index of $\gamma_p^k$ as a Reeb orbit of $\lambda$ and $\mathrm{ind}_f(p)$ denotes the Morse index of the function $f$ at the critical point $p$.
\end{lemma}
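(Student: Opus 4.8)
The plan is to compute the linearized Reeb flow of $\lambda_\varepsilon$ along $\gamma_p^k$, restricted to $\xi$, and to read off its Conley–Zehnder index by the Morse–Bott scheme of Bourgeois. First I would make the perturbed Reeb field explicit: writing $h = 1+\varepsilon\,\mathfrak{q}^*f$, a direct computation as in Section~\ref{sec:preqb} gives
$$R_{\lambda_\varepsilon} = \frac{1}{h}\bigl(R + \varepsilon X_g + O(\varepsilon^2)\bigr),$$
where $X_g \in \xi$ is the unique horizontal field with $d\lambda(X_g,\cdot) = \mathfrak{q}^*df$ on $\xi$, so that $\mathfrak{q}_*X_g = X_f$ is the Hamiltonian vector field of $f$ on $(\Sigma,\omega)$. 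At a critical point $p$ the correction vanishes along the fiber, which recovers the statement of Lemma~\ref{critpts} that $\gamma_p$ is the reparametrized fiber over $p$.

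Since $(Y,\lambda)$ is Zoll, every point of $Y$ lies on a closed orbit of period $k\mathcal{A}_{\min}(\lambda)$, so the unperturbed linearized return map of $\gamma_p^k$ is the identity on the two–dimensional bundle $\xi$; the associated path is a loop at $I\in \mathrm{Sp}(2)$ whose Robbin–Salamon index is, by definition, $\mu^\tau(\gamma_p^k)$. The effect of the perturbation is to resolve this degeneracy: in a symplectic trivialization $\tau$ of $\xi$ along $\gamma_p$, the linearized flow of $\lambda_\varepsilon$ factors, up to $O(\varepsilon^2)$, as the unperturbed loop followed by the linearized time–$k\mathcal{A}_{\min}$ Hamiltonian flow of $\varepsilon f$ at $p$, i.e. by $\exp\bigl(\varepsilon\,k\mathcal{A}_{\min}\,J_0\,\mathrm{Hess}_p f\bigr)$. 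As $\mathrm{Hess}_p f$ is nondegenerate, $J_0\,\mathrm{Hess}_p f$ is elliptic when $p$ is a minimum or maximum (Morse index $0$ or $2$) and hyperbolic when $p$ is a saddle (index $1$), the sign of the elliptic rotation being fixed by the definiteness of $\mathrm{Hess}_p f$.

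I would then invoke the standard index formula for a small nondegenerate perturbation of a Morse–Bott loop: a small elliptic endpoint in either direction, or a hyperbolic endpoint, produces the three values $\mu^\tau(\gamma_p^k)+1$, $\mu^\tau(\gamma_p^k)-1$, and $\mu^\tau(\gamma_p^k)$. Matching the maximum, minimum, and saddle to these yields exactly $CZ_\tau(\gamma_p^k) = \mu^\tau(\gamma_p^k) - 1 + \mathrm{ind}_f(p)$. The step requiring real care is quantitative rather than formal: the perturbation acts over the long time $k\mathcal{A}_{\min}$, so one must rule out extra winding and discard the $O(\varepsilon^2)$ terms without changing the integer index. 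This is exactly what the hypotheses inherited from Lemma~\ref{critpts} secure, since the bound $k\mathcal{A}_{\lambda_\varepsilon}(\gamma_p)\le T$ forces $k = O(1/\mathcal{A}_{\min})$ while $\varepsilon=\varepsilon(T)$ is small, so the total rotation $\varepsilon\,k\mathcal{A}_{\min}\,\|\mathrm{Hess}_p f\|$ stays well below the threshold for an index jump. The complete bookkeeping is carried out in Lemma~2.4 of \cite{bourgeois2002morse}.
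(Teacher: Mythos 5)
First, a point of reference: the paper gives no proof of this lemma at all --- it is imported verbatim as Lemma~2.4 of \cite{bourgeois2002morse} --- so your sketch cannot be compared to an internal argument, only to the cited source. In outline, what you propose \emph{is} Bourgeois' argument, specialized to a prequantization bundle over a surface: compute $R_{\lambda_\varepsilon}$, note that fibers over critical points of $f$ survive as (reparametrized) orbits, factor the linearized return map as the unperturbed loop followed by a small symplectic map generated by $\mathrm{Hess}_p f$, and use the catenation property of the Robbin--Salamon index. Your formula for $R_{\lambda_\varepsilon}$ is correct, and the quantitative concern you raise (the perturbation acts for time of order $T$, so one needs $\varepsilon(T)\cdot T$ small to prevent index jumps) is exactly the role of Lemma~\ref{critpts}; note only that the action bound gives $k \le T/\mathcal{A}_{\min}(\lambda)$, i.e.\ $k = O(T)$, not ``$k = O(1/\mathcal{A}_{\min})$''.

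The genuine gap is the final matching, which is the only step where the content of the lemma actually lives. The endpoint correction to the index is $\pm\tfrac{1}{2}\mathrm{sgn}(\mathrm{Hess}_p f)$, and since $\tfrac{1}{2}\mathrm{sgn} = 1 - \mathrm{ind}_f(p)$ in dimension two, the two possible signs give the inequivalent formulas $CZ_\tau = \mu^\tau - 1 + \mathrm{ind}_f(p)$ and $CZ_\tau = \mu^\tau + 1 - \mathrm{ind}_f(p)$; which one occurs depends on the direction of rotation induced by the specific perturbation $(1+\varepsilon\,\mathfrak{q}^*f)\lambda$ and on the crossing-form conventions. You assert that the matching ``yields exactly'' the stated formula without computing this sign, and your closing appeal to ``Lemma~2.4 of \cite{bourgeois2002morse}'' is circular, since that is precisely the statement being proved. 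To close the gap, either compute the crossing form of the short path $t \mapsto \exp\bigl(t\,\varepsilon k T\, J_0\,\mathrm{Hess}_p f\bigr)$ at $t=0$, or run a sanity check against the ellipsoid $\partial E(a,b)$ with $a<b$ close together: there the short (minimum) and long (maximum) orbits have $CZ(\gamma^k) = 4k-1$ and $4k+1$ in the standard trivialization, while the degenerate Robbin--Salamon index of the round sphere is $4k$, confirming that the minimum contributes $\mu^\tau - 1$ and the maximum $\mu^\tau + 1$, i.e.\ $CZ_\tau = \mu^\tau - 1 + \mathrm{ind}_f(p)$ and not the other sign.
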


Since $f$ is a perfect Morse function on $S^2$, we have only two critical points say $p_1$, $p_2$ with $\mathrm{ind}_f(p_1) = 0$ and $\mathrm{ind}_f(p_2)=2$. In this case, Lemma \ref{lemmacz} yields
$$CZ_\tau(\gamma_{p_1}^k) = \mu^\tau(\gamma_{p_1}^k) - 1 \quad \text{and} \quad  CZ_\tau(\gamma_{p_2}^k) = \mu^\tau(\gamma_{p_2}^k) + 1.$$

Given $p \in S^2$ and $q \in \mathfrak{q}^{-1}(p) \subset Y$, the natural identification $\xi_q \cong T_pS^2$ allow us to obtain a trivialization of $\xi_q$ out of a trivialization of $T_pS^2$. This is often called as the \emph{constant trivizalition} over the Reeb orbit $\gamma_p = \mathfrak{q}^{-1}(p)$ since the linearized Reeb flow with respect to this trivialization is given by the identity map. Denoting by $\tau$ such a constant trivialization of $\xi$ along a circle fiber $\gamma_p = \mathfrak{q}^{-1}(p)$, one computes $\mu^\tau(\gamma^k_p) = 0$ for every $k$, see \cite[Lemma 3.8]{nelson2020embedded}. In particular, for this trivialization, we get
$$CZ_\tau(\gamma_{p_1}^k) = - 1 \quad \text{and} \quad  CZ_\tau(\gamma_{p_2}^k) = 1.$$
Therefore, since these are odd numbers for all iterates, $\gamma_{p_1}$ and $\gamma_{p_2}$ are elliptic\footnote{i.e., the eigenvalues of the linearized Reeb flow restricted to $\xi$ lies on the unit circle.} regarded as Reeb orbits for the contact form $\lambda_\varepsilon$. In this case, the ECH index parity property in \cite[\S 3.4]{hutchings2014lecture} ensures that the ECH differential
\begin{equation}\label{diffvanish}
    \partial \colon ECC_*^T(L(p,1),\lambda_\varepsilon,J) \to ECC_*^T(L(p,1),\lambda_\varepsilon,J)
\end{equation}
vanishes for any almost complex structure $J$. As an immediate consequence, we have the following computation.

\begin{prop}\label{filtedredech}
    For each $T>0$ and $\Gamma \in H_1(L(p,1);\Z) \cong \Z_p$, there exists $\varepsilon(T)>0$ such that the filtered embedded contact homology group $ECH_*^T(L(p,1),\lambda_{\varepsilon(T)}, \Gamma)$ is the $\Z_2$ vector space generated by orbit sets $\{(\gamma_{p_1},m_1), (\gamma_{p_2},m_2)\}$ such that $m_1 + m_2 \equiv \Gamma \mod p$ and
    $$m_1 \mathcal{A}_{\lambda_\varepsilon}(\gamma_{p_1}) + m_2 \mathcal{A}_{\lambda_\varepsilon}(\gamma_{p_2}) < T.$$
    \begin{proof}
    As we recalled in Section \ref{sec:preqb}, we have $H_1(L(p,1;\Z) \cong \Z_p$ and the homology of the fiber $\gamma_p = \mathfrak{q}^{-1}(p)$ over $p \in S^2$ is such that its $k$-fold cover represents the class $k \mod p$. In particular, the total homology $m_1[\gamma_{p_1}] + m_2[\gamma_{p_2}]$ is given by $m_1+m_2 \mod p$. The fact that $ECH_*^T(L(p,1),\lambda_\varepsilon, \Gamma)$ is generated by such orbit sets now follows from Lemma \ref{critpts}, the fact that the ECH differential in \eqref{diffvanish} vanishes and the definition of filtered ECH.
\end{proof}
\end{prop}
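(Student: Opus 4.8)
The plan is to read off the filtered ECH directly from its definition: first pin down which orbit sets generate the filtered chain complex below action $T$ in the homology class $\Gamma$, and then argue that the differential vanishes identically, so that the filtered homology equals the filtered chain group.

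First I would fix $T>0$ and take $\varepsilon = \varepsilon(T)>0$ as provided by Lemma \ref{critpts}, so that all Reeb orbits of $\lambda_\varepsilon$ of action at most $T$ are nondegenerate and are exactly the iterates of the two fiber orbits $\gamma_{p_1}$ and $\gamma_{p_2}$ over the critical points of the perfect Morse function $f$. Because both $\gamma_{p_1}$ and $\gamma_{p_2}$ are elliptic --- their Conley--Zehnder indices being odd for every iterate, as computed just above via Lemma \ref{lemmacz} --- the admissibility condition on ECH generators places no constraint on their multiplicities. Hence every generator of $ECC_*^T(L(p,1),\lambda_\varepsilon,\Gamma)$ has the form $\{(\gamma_{p_1},m_1),(\gamma_{p_2},m_2)\}$, and the filtration condition is precisely $m_1\mathcal{A}_{\lambda_\varepsilon}(\gamma_{p_1}) + m_2\mathcal{A}_{\lambda_\varepsilon}(\gamma_{p_2}) < T$.

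To isolate the homology class, I would invoke the computation of $H_1(L(p,1);\Z)\cong\Z_p$ recalled in Section \ref{sec:preqb}, where the $k$-fold cover of a fiber $\mathfrak{q}^{-1}(x)$ represents $k \bmod p$. The total class of $\{(\gamma_{p_1},m_1),(\gamma_{p_2},m_2)\}$ is therefore $m_1 + m_2 \bmod p$, so restricting to the $\Gamma$-summand selects exactly the orbit sets with $m_1 + m_2 \equiv \Gamma \pmod p$. This identifies the generating set of $ECC_*^T(L(p,1),\lambda_\varepsilon,\Gamma)$ with the one asserted in the proposition.

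The one genuinely Floer-theoretic step is the vanishing of the differential, and this is where I would rely on the ECH index parity property of \cite[\S 3.4]{hutchings2014lecture}. Since every embedded orbit in play is elliptic, no generator contains a positive hyperbolic orbit, so the parity factor attached to each generator is trivial and the ECH index $I(\alpha,\beta,Z)$ between any two generators is even. As the differential counts only ECH index $1$ (hence odd) currents, it must vanish on the filtered complex, and therefore $ECH_*^T(L(p,1),\lambda_\varepsilon,\Gamma) = ECC_*^T(L(p,1),\lambda_\varepsilon,\Gamma)$ with the stated generators. I do not expect a serious obstacle here: all the analytic content has already been packaged into Lemmas \ref{critpts} and \ref{lemmacz}, so the remaining work is the bookkeeping needed to confirm that the parity property applies uniformly to every pair of generators and that the action and homology constraints are mutually consistent.
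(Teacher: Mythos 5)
Your proposal is correct and follows essentially the same route as the paper: invoke Lemma \ref{critpts} to identify the low-action orbits, use the homology of the fibers to pin down the class $m_1+m_2 \bmod p$, and conclude via the vanishing of the differential, which the paper establishes just before the proposition through the same ellipticity/ECH-index-parity argument you give. The only difference is presentational --- you spell out the admissibility condition on elliptic multiplicities and the parity step explicitly, which the paper leaves implicit or delegates to the surrounding text and to \cite[\S 3.4]{hutchings2014lecture}.
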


Using a filtered version of the Taubes isomorphism in Theorem \ref{taubes} obtained in \cite{hutchings2013proof}, Morgan and Weiler obtained the following result for any prequantization bundle $\mathfrak{q} \colon Y \to \Sigma_g$.

\begin{theorem}[Proposition 3.2 + Theorem 7.1 in \cite{nelson2020embedded}]\label{directlimit}
    For any $\Gamma \in H_1(Y;\Z)$ the filtered ECH groups $ECH_*^T(Y,\lambda_{\varepsilon(T)},\Gamma)$ forms a direct system. Moreover, the direct limit recovers the original ECH group
    $$\lim_{T \to \infty} ECH_*^T(Y,\lambda_{\varepsilon(T)},\Gamma) = ECH_*(Y,\xi,\Gamma),$$
    where $\xi = \ker \lambda_\varepsilon$ is the contact structure defined by $\lambda_\varepsilon$.
\end{theorem}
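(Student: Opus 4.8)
The plan is to realize the asserted direct system through the two canonical maps carried by filtered ECH—the inclusion maps that raise the action threshold and the cobordism maps induced by exact symplectic cobordisms that lower the perturbation parameter $\varepsilon$—and then to identify the colimit with $ECH_*(Y,\xi,\Gamma)$ by transporting the whole system, via the \emph{filtered} Taubes isomorphism of Hutchings--Taubes \cite{hutchings2013proof}, to filtered Seiberg--Witten Floer cohomology, where the limit is transparent.

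\emph{Construction of the system.} After adding a constant I may assume the perfect Morse function $f$ is positive, so that $\varepsilon>\varepsilon'$ forces $\lambda_\varepsilon=(1+\varepsilon\mathfrak{q}^*f)\lambda>\lambda_{\varepsilon'}$ pointwise. For $T\le T'$ I would choose $\varepsilon(T)\ge\varepsilon(T')$ as in Lemma \ref{critpts}, each small enough that all Reeb orbits of action at most $T'$ are nondegenerate for both forms. The region bounded by the graphs of $\lambda_{\varepsilon(T')}$ and $\lambda_{\varepsilon(T)}$ inside the symplectization is a topologically trivial, hence exact, symplectic cobordism with positive end $(Y,\lambda_{\varepsilon(T)})$ and negative end $(Y,\lambda_{\varepsilon(T')})$. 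Composing the threshold-raising inclusion $ECH^T_*(Y,\lambda_{\varepsilon(T)},\Gamma)\to ECH^{T'}_*(Y,\lambda_{\varepsilon(T)},\Gamma)$ with the filtered cobordism map $ECH^{T'}_*(Y,\lambda_{\varepsilon(T)},\Gamma)\to ECH^{T'}_*(Y,\lambda_{\varepsilon(T')},\Gamma)$ of \cite{hutchings2013proof} gives the structure map $\Phi_{T,T'}$. Coherence $\Phi_{T',T''}\circ\Phi_{T,T'}=\Phi_{T,T''}$ follows from the composition property of ECH cobordism maps and the fact that cobordism maps commute with inclusion maps, both established in \cite{hutchings2013proof}.

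\emph{Identification of the limit.} The filtered Taubes isomorphism identifies each $ECH^{T}_*(Y,\lambda_{\varepsilon(T)},\Gamma)$ with a filtered Seiberg--Witten Floer group and, crucially, intertwines inclusion maps with their Seiberg--Witten counterparts and exact-cobordism maps with Seiberg--Witten continuation maps; since our cobordism is topologically a product $[0,1]\times Y$ relating the same spin-c structure, the latter are isomorphisms. On the Seiberg--Witten side the underlying structure $\mathfrak{s}_\xi+PD(\Gamma)$ is fixed and independent of $\varepsilon$, so the transported system is a colimit over an exhausting filtration whose value is the full group $\widehat{HM}^{-*}(Y,\mathfrak{s}_\xi+PD(\Gamma))$. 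By Theorem \ref{taubes} this is precisely $ECH_*(Y,\xi,\Gamma)$, and unwinding the identification shows the colimit map is induced by the inclusions $\imath_T$. Equivalently, one verifies directly that $\varinjlim_{T}\imath_T$ is surjective—given $\sigma$, the $C^0$-continuity of $c^{ECH}_\sigma$ places $\sigma$ in $\mathrm{im}(\imath_T)$ once $T$ is large and $\varepsilon(T)$ small—and injective, since a class killed in full ECH is already a boundary at finite action.

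\emph{Main obstacle.} The genuine difficulty is not the colimit bookkeeping but verifying that the inclusion and exact-cobordism maps are well defined on filtered ECH, respect the action filtration, compose coherently, and match their Seiberg--Witten counterparts; this is exactly the deep input of Hutchings--Taubes \cite{hutchings2013proof} together with Taubes's isomorphism (Theorem \ref{taubes}). Working purely at the chain level is inadequate, because the generators $\gamma_{p_i}^k$ and their actions move continuously with $\varepsilon$, so no naive chain-level identification across different $\lambda_\varepsilon$ is available; the contact-form independence must be imported from the Seiberg--Witten side.
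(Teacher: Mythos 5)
You are proving a statement that the paper itself does not prove: Theorem \ref{directlimit} is imported verbatim from \cite{nelson2020embedded} (Nelson--Weiler), and the paper's only gloss on its proof is that it uses ``a filtered version of the Taubes isomorphism obtained in \cite{hutchings2013proof}.'' Your outline reconstructs exactly that strategy -- a direct system assembled from action-threshold inclusions and exact-cobordism maps on filtered ECH, with the colimit identified by transporting everything to Seiberg--Witten Floer cohomology via Theorem \ref{taubes} and its filtered refinement -- so your route is the intended one, not a genuinely different argument.

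There is, however, one step that fails as written. Your structure map begins with the inclusion $ECH^T_*(Y,\lambda_{\varepsilon(T)},\Gamma)\to ECH^{T'}_*(Y,\lambda_{\varepsilon(T)},\Gamma)$, but the target group is not defined: filtered ECH at action level $L$ requires the contact form to be $L$-nondegenerate (all orbits of action below $L$ nondegenerate, no orbit set of action exactly $L$), while Lemma \ref{critpts} only guarantees that $\lambda_{\varepsilon(T)}$ is nondegenerate up to action $T$. Above that threshold $\lambda_{\varepsilon(T)}$ has Morse--Bott circles of degenerate Reeb orbits lying over the level circles of $f$, and for $T'$ sufficiently larger than $T$ these enter the action window, so neither the inclusion into level $T'$ nor the filtered cobordism map at level $T'$ (which needs $T'$-nondegeneracy of its positive end) exists. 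The standard fix is to compose in the opposite order: first the cobordism map at level $T$, namely $\Phi^T\colon ECH^T_*(Y,\lambda_{\varepsilon(T)},\Gamma)\to ECH^T_*(Y,\lambda_{\varepsilon(T')},\Gamma)$, then the inclusion $ECH^T_*(Y,\lambda_{\varepsilon(T')},\Gamma)\to ECH^{T'}_*(Y,\lambda_{\varepsilon(T')},\Gamma)$; both are defined because $\lambda_{\varepsilon(T')}$ is $T'$-nondegenerate, hence also $T$-nondegenerate. A second, smaller overstatement: you assert that because the cobordism is a topological product the (filtered) Seiberg--Witten continuation maps are isomorphisms. Only the \emph{unfiltered} continuation maps are isomorphisms; the filtered maps generally are not, since the two forms have different generators below a given action. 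The identification of the colimit should therefore invoke the Hutchings--Taubes axioms -- compatibility of cobordism maps with inclusions and the direct-limit property $\lim_{L\to\infty}\Phi^L=\Phi$, where $\Phi$ on full ECH is the canonical (Seiberg--Witten) isomorphism for a product cobordism -- rather than filtered-level isomorphisms.
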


Now we recall the ECH groups for Lens spaces $L(p,1)$. For this, see \cite[Example 1.3]{nelson2020embedded} or \cite[Corollary 3.4]{kronheimer2007monopoleslens} combined with Taubes isomorphism in Theorem \ref{taubes}.

\begin{theorem}\label{echlensspaces}
    The $\Z$-graded module ECH for the Lens space $L(p,1)$ is given by
    $$ECH_*(L(p,1),\xi,\Gamma) = \begin{cases}
        \Z_2, \quad * \in 2\Z, \\ 0, \quad \text{else},
    \end{cases}$$
    for each singular homology class $\Gamma \in H_1(L(p,1);\Z) \cong \Z_p$. Moreover, if $\zeta_k$ is the generator of $ECH_{2k}(L(p,1),\xi,\Gamma)$, then $U\zeta_k = \zeta_{k-1}$ for $k\geq 1$.
\end{theorem}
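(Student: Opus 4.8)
The plan is to combine the filtered computation already set up in Proposition~\ref{filtedredech} and Theorem~\ref{directlimit} with a grading count, and to pin down the $U$-map via Taubes' isomorphism. The first move is to observe that, since the ECH differential vanishes on each filtered complex $ECC_*^T(L(p,1),\lambda_{\varepsilon(T)},\Gamma)$ by the parity argument (all relevant orbits are elliptic with odd Conley--Zehnder index) and the inclusion-induced maps are compatible with the direct limit, passing to the limit via Theorem~\ref{directlimit} shows that $ECH_*(L(p,1),\xi,\Gamma)$ is the $\Z_2$-vector space freely generated by the orbit sets $\{(\gamma_{p_1},m_1),(\gamma_{p_2},m_2)\}$ with $m_1+m_2\equiv\Gamma\pmod p$ and $m_1,m_2\geq 0$. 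Every such orbit set appears at some finite filtration level where the differential already vanishes, so $\partial=0$ on the full complex.

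The core of the argument is then a grading computation. Using the Conley--Zehnder indices already recorded, namely $CZ_\tau(\gamma_{p_1}^k)=-1$ and $CZ_\tau(\gamma_{p_2}^k)=+1$ in the constant trivialization $\tau$, together with the standard expressions for the relative first Chern class $c_\tau(Z)$ and the relative self-intersection $Q_\tau(Z)$ for the prequantization bundle $\mathfrak q\colon L(p,1)\to S^2$ with $c_1=-p$, I would evaluate the ECH index
$$I\big(\{(\gamma_{p_1},m_1),(\gamma_{p_2},m_2)\}\big)=c_\tau(Z)+Q_\tau(Z)+\sum_{k=1}^{m_1}CZ_\tau(\gamma_{p_1}^k)+\sum_{k=1}^{m_2}CZ_\tau(\gamma_{p_2}^k).$$
The goal is to check that, within each fixed class $\Gamma$, this index is always even and sets up a bijection between the generators and the relevant even integers, so that $ECH_*(L(p,1),\xi,\Gamma)$ is $\Z_2$ in each even degree and $0$ in odd degrees. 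This is the step where the lattice-point combinatorics of the two elliptic families must be organized carefully; it is entirely analogous to the ellipsoid computation for $S^3=L(1,1)$ and to the prequantization-bundle computation in \cite{nelson2020embedded}, so I would follow that bookkeeping rather than reinvent it.

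For the last assertion, the cleanest route is Taubes' isomorphism (Theorem~\ref{taubes}) together with the fact that it intertwines the ECH $U$-map with the $U$-module structure on Seiberg--Witten Floer cohomology \cite{taubes2010embeddedV}. Since $L(p,1)$ is a rational homology sphere carrying a metric of positive scalar curvature, its monopole Floer cohomology in each spin$^{c}$ structure has no irreducible contributions and reduces to the single reducible tower, on which $U$ acts as the degree $-2$ shift; comparison with the Kronheimer--Mrowka computation in \cite{kronheimer2007monopoleslens} then yields $U\zeta_k=\zeta_{k-1}$. Alternatively, purely within ECH, once one knows the homology is $\Z_2$ in each even degree one can argue that $U$ must be surjective onto each lower nonzero degree (reflecting the tower structure of $\widehat{HM}$ in the positive-scalar-curvature case) and hence an isomorphism onto the next lower degree.

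The main obstacle I anticipate is the $U$-map rather than the grading count. Direct curve counts for the degree $-2$ operator are delicate, so I expect to rely on Taubes' isomorphism and the known Seiberg--Witten computation for lens spaces to conclude that $U$ is the shift, instead of attempting an intrinsic ECH argument. The grading computation, although it requires care with the $c_\tau+Q_\tau$ terms and with distinguishing the $p$ distinct classes $\Gamma$, is essentially the same bookkeeping that produces the $\Z_2$-in-each-even-degree answer for $S^3$, and I would present it by reduction to that model.
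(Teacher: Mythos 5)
Your proposal is correct and follows essentially the same route as the paper: the paper does not prove Theorem~\ref{echlensspaces} from scratch but recalls it from \cite[Example 1.3]{nelson2020embedded} — whose proof is precisely the filtered direct-limit computation plus ECH-index bookkeeping you sketch, and which the paper partly reproduces in Proposition~\ref{filtedredech} and Theorem~\ref{directlimit} — together with \cite[Corollary 3.4]{kronheimer2007monopoleslens} and Taubes' isomorphism (Theorem~\ref{taubes}) for the $U$-map, exactly your primary route. The only caveat is that your ``alternative'' purely-within-ECH argument for $U\zeta_k=\zeta_{k-1}$ is not valid on its own (knowing the groups does not determine the $U$-map, which could a priori vanish), but this does not affect correctness since your main argument via Taubes and Kronheimer--Mrowka is the one the paper relies on.
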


We are now ready to prove that $\mathcal{A}_{min}(\lambda)$ is an ECH spectral value for a Zoll contact form $\lambda$ on $L(p,1)$.

\begin{lemma}\label{echspeclp1}
Let $\lambda$ be a Zoll contact form on $L(p,1)$. Then, there exists a nonzero class $\sigma \in ECH(L(p,1),\xi)$ such that $c^{ECH}_\sigma(L(p,1),\lambda) = c^{ECH}_{U\sigma}(L(p,1),\lambda) = \mathcal{A}_{min}(\lambda)$.
\begin{proof}
    The ECH class induced by the representative $\{(\gamma_{p_2},1)\}$ in the direct limit given in Theorem \ref{directlimit} satisfies this property. In fact, for each $T\gg \mathcal{A}_{min}(\lambda)$, we have 
    $$\mathcal{A}_{\lambda_\varepsilon}(\gamma_{p_1}) = (1+\varepsilon f(p_1))\mathcal{A}_{min}(\lambda) < (1+\varepsilon f(p_2))\mathcal{A}_{min}(\lambda) = \mathcal{A}_{\lambda_\varepsilon}(\gamma_{p_2}) < 2\mathcal{A}_{\lambda_\varepsilon}(\gamma_{p_1}).$$
    Since $U\zeta_k = \zeta_{k-1}$ and the $U$ map decreases the action, for $\Gamma \equiv 1 \mod p$, we conclude that $\zeta_0 , \zeta_1 \in ECH_2(L(p,1),\xi,\Gamma)$ must be the generators induced by the representatives $\{(\gamma_{p_1},1)\}$ and $\{(\gamma_{p_2},1)\}$, respectively. Therefore, we compute
    $$c^{ECH}_{\zeta_1}(L(p,1),\lambda) = \lim_{T \to \infty} \mathcal{A}_{\lambda_\varepsilon}(\gamma_{p_2}) = \lim_{\varepsilon \to 0} (1+\varepsilon f(p_2))\mathcal{A}_{min}(\lambda) = \mathcal{A}_{min}(\lambda).$$
    Similarly, we have
    $$c^{ECH}_{\zeta_0}(L(p,1),\lambda) = \lim_{T \to \infty} \mathcal{A}_{\lambda_\varepsilon}(\gamma_{p_1}) = \lim_{\varepsilon \to 0} (1+\varepsilon f(p_1))\mathcal{A}_{min}(\lambda) = \mathcal{A}_{min}(\lambda).$$
    Since $U\zeta_1 = \zeta_0$, this proves this Lemma. We note that in the sphere case, $S^3 = L(1,1)$, the proof goes word-by-word changing $\zeta_1, \zeta_0$ by $\zeta_2$ and $\zeta_1$, respectively, since $\zeta_0 = [\emptyset] \in ECH_0(S^3,\xi,0)$.
\end{proof}
\end{lemma}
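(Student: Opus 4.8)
The plan is to exhibit an explicit ECH class using the Bourgeois-type perturbation $\lambda_\varepsilon = (1+\varepsilon\,\mathfrak{q}^*f)\lambda$ set up above, where $f\colon S^2 \to \R$ is a perfect Morse function with minimum $p_1$ and maximum $p_2$. First I would record the actions of the two surviving orbit families: by construction $\mathcal{A}_{\lambda_\varepsilon}(\gamma_{p_i}) = (1+\varepsilon f(p_i))\mathcal{A}_{\min}(\lambda)$. Normalizing $f$ so that $f(p_1) < f(p_2)$ and taking $\varepsilon$ small, these satisfy
$$\mathcal{A}_{\lambda_\varepsilon}(\gamma_{p_1}) < \mathcal{A}_{\lambda_\varepsilon}(\gamma_{p_2}) < 2\,\mathcal{A}_{\lambda_\varepsilon}(\gamma_{p_1}).$$
Hence, in the homology class $\Gamma \equiv 1 \bmod p$, the two lowest-action orbit sets allowed by Proposition~\ref{filtedredech} are precisely the single covers $\{(\gamma_{p_1},1)\}$ and $\{(\gamma_{p_2},1)\}$, sitting in consecutive even gradings.

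The second step is to identify these two orbit sets with the generators $\zeta_0,\zeta_1$ of Theorem~\ref{echlensspaces} inside the direct limit of Theorem~\ref{directlimit}. Since the ECH differential vanishes (the index-parity input from Lemma~\ref{lemmacz}), both orbit sets are cycles representing nonzero classes, and in class $\Gamma$ the ECH is rank one in each even degree with $U\zeta_k = \zeta_{k-1}$. The decisive observation is that the $U$ map does not increase action; combined with $U\zeta_1 = \zeta_0$ and the strict inequality $\mathcal{A}_{\lambda_\varepsilon}(\gamma_{p_1}) < \mathcal{A}_{\lambda_\varepsilon}(\gamma_{p_2})$, matching against the two available lowest-action generators forces $\zeta_1$ to be represented by $\{(\gamma_{p_2},1)\}$ and $\zeta_0$ by $\{(\gamma_{p_1},1)\}$.

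With this identification in hand, the last step is a limit computation using the $C^0$-continuity of the spectral invariant to pass from $\lambda_\varepsilon$ to the degenerate Zoll form $\lambda$. Taking $\sigma = \zeta_1$, I would compute
$$c^{ECH}_{\zeta_1}(L(p,1),\lambda) = \lim_{\varepsilon \to 0}(1+\varepsilon f(p_2))\mathcal{A}_{\min}(\lambda) = \mathcal{A}_{\min}(\lambda),$$
and likewise $c^{ECH}_{\zeta_0}(L(p,1),\lambda) = \lim_{\varepsilon \to 0}(1+\varepsilon f(p_1))\mathcal{A}_{\min}(\lambda) = \mathcal{A}_{\min}(\lambda)$. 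Since $U\zeta_1 = \zeta_0$, the class $\sigma = \zeta_1$ satisfies $c^{ECH}_\sigma = c^{ECH}_{U\sigma} = \mathcal{A}_{\min}(\lambda)$, as required. For the sphere $S^3 = L(1,1)$ one only shifts indices by one, since there $\zeta_0 = [\emptyset]$ has zero action; the same argument with $\sigma = \zeta_2$ and $U\sigma = \zeta_1$ applies verbatim.

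The main obstacle I anticipate is the second step, namely correctly pinning down which filtered generator is $\zeta_0$ and which is $\zeta_1$. The grading bookkeeping alone leaves the labeling ambiguous; it is only the action-decreasing property of $U$ together with the relation $U\zeta_1 = \zeta_0$ that forces the higher-action representative to carry $\zeta_1$, and this is exactly what makes the two spectral invariants coincide at $\mathcal{A}_{\min}(\lambda)$.
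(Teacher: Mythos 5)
Your proposal is correct and follows essentially the same route as the paper: the Bourgeois perturbation $\lambda_\varepsilon$, the action inequalities forcing $\{(\gamma_{p_1},1)\}$ and $\{(\gamma_{p_2},1)\}$ to represent $\zeta_0$ and $\zeta_1$ via the action-decreasing property of $U$, the limit computation of the spectral invariants as $\varepsilon \to 0$, and the index shift for $S^3$. The only cosmetic difference is that you phrase the final step via $C^0$-continuity in $\varepsilon$ where the paper phrases it as a direct limit in $T$; these amount to the same thing.
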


\begin{remark}
    A similar argument proves that the ECH spectral invariants for Zoll contact forms on Lens spaces $L(p,1)$ are given by
\begin{align*}
\mathcal{S}^{ECH}(L(p,1),\lambda_{Zoll}) &= \{c^{ECH}_{\sigma}(L(p,1),\lambda_{Zoll})\mid \sigma \in ECH_*(L(p,1),\ker \lambda_{Zoll})\} \\ &= \{n\mathcal{A}_{min}\mid n \in \Z_{\geq 0}\}.
\end{align*}
Moreover, for each $k \mod p$, the ECH classes in $ECH_*(L(p,1),\ker \lambda_{Zoll},\Gamma)$, for $\Gamma \equiv k \mod p$, are precisely the classes with ECH spectral invariants given by $(m_1+m_2)\mathcal{A}_{min}$ such that $m_1,m_2 \in \Z_{\geq 0}$ and $m_1 + m_2 \equiv k \mod p$.
\end{remark}

\begin{proof}[Proof of Theorem \ref{echlens}]
If a class $\sigma \in ECH(L(p,1),\xi)$ is such that
$$c^{ECH}_\sigma(L(p,1),\lambda) = c^{ECH}_{U\sigma}(L(p,1),\lambda) = \mathcal{A}_{min}(\lambda)>0,$$
then $\sigma$ and $U\sigma$ are nonzero in $ECH(L(p,1),\xi)$. In this case, Theorem \ref{closelech} yields $$\mathrm{Close}^{\mathcal{A}_{min}(\lambda)}(L(p,1),\lambda) = 0.$$ By Lemma \ref{closel=0}, this implies that every point in $L(p,1)$ is contained in a Reeb orbit of action at most $\mathcal{A}_{min}(\lambda)$, and hence, $\lambda$ is a Zoll contact form. This part holds for any closed contact $3$-manifold $Y$.

On the other hand, suppose that $\lambda$ is a Zoll contact form on $L(p,1)$. By Lemma \ref{echspeclp1}, there exists a nonzero class $\sigma \in ECH(L(p,1),\xi)$ such that $c^{ECH}_\sigma(L(p,1),\lambda) = c^{ECH}_{U\sigma}(L(p,1),\lambda) = \mathcal{A}_{min}(\lambda)$ and this is our desired conclusion.
\end{proof}

\section{Proof of Theorems \ref{thm:zoll iff c1=c2} and \ref{c2lp1}}\label{proofthm}

\begin{proof}[Proof of Theorem \ref{thm:zoll iff c1=c2}]
    As we noted in the Introduction, the ``only if'' part of Theorem of \ref{thm:zoll iff c1=c2} follows from the Spectral Gap Closing Bound in \eqref{specgapprop} and Lemma \ref{closel=0} and holds for any closed $3$-manifold. It suffices then to prove that $c_2(S^3,\lambda) = c_1(S^3,\lambda) = \mathcal{A}_{min}(\lambda)$ for any Zoll contact form $\lambda$ on $S^3$.
    
    Let $\lambda$ be a Zoll contact form on $S^3 = L(1,1)$. From the spectrality property it follows
    \begin{equation}\label{lowerbound}
        \mathcal{A}_{min}(\lambda) \leq c_1^{ECH}(S^3,\lambda) \leq c_2^{ECH}(S^3,\lambda)
    \end{equation}
Moreover, by the Liouville domain property, we know
\begin{equation}\label{liouvibound}
    c_k(S^3,\lambda) \leq c_k^{ECH}(S^3,\lambda).
\end{equation}
Recall that by Theorem \ref{echlensspaces} we have
$$ECH(S^3,\xi,0) = \bigoplus_{k \in \Z_{\geq 0}} \zeta_{k}\Z_2,$$
and $U\zeta_k = \zeta_{k-1}$ for $k\geq 1$. From the proof of Lemma \ref{echspeclp1}, we have $c^{ECH}_{\zeta_2}(S^3,\lambda) = \mathcal{A}_{min}(\lambda)$. Since $U^2\zeta_2 = U \zeta_1 = \zeta_0 = [\emptyset]$, we get
\begin{equation}\label{upperbound}
    c_2^{ECH}(S^3,\lambda) \leq c^{ECH}_{\zeta_2}(S^3,\lambda) = \mathcal{A}_{min}(\lambda).
\end{equation}
The result now follows combining the spectrality property of $c_k$, \eqref{lowerbound}, \eqref{liouvibound} and \eqref{upperbound}.
\end{proof}

Finally, we prove Theorem \ref{c2lp1}

\begin{proof}[Proof of Theorem \ref{c2lp1}]
    We start noting that from Theorem \ref{echlens}, there exists a class $\sigma \in ECH(L(p,1),\xi)$ such that
    $$c^{ECH}_\sigma(L(p,1),\lambda_0) = c^{ECH}_{U\sigma}(L(p,1),\lambda_0) = \mathcal{A}_{min}(\lambda_0)>0.$$
    In particular, $U\sigma \neq 0 \in ECH(L(p,1),\xi)$. In \cite[Theorem 6.1]{hutchings2022elementary}, Hutchings proved that
    $$c_k(Y,\lambda) \leq \inf \{c_\sigma^{ECH}(Y,\lambda) \mid U^k\sigma \neq 0\}.$$
    Therefore, we have $c_1(L(p,1),\lambda_0) \leq c_\sigma^{ECH}(L(p,1),\lambda_0) = \mathcal{A}_{min}(\lambda_0)$. Thus, by spectrality property, $c_1(L(p,1),\lambda_0) = \mathcal{A}_{min}(\lambda_0)$ must hold. Now it follows from the sublinearity property that
    $$c_2(L(p,1),\lambda_0) \leq c_1(L(p,1),\lambda_0) + c_1(L(p,1),\lambda_0) = 2\mathcal{A}_{min}(\lambda_0).$$
    It suffices to prove that $c_2(L(p,1),\lambda_0) \geq 2\mathcal{A}_{min}(\lambda_0)$ and again by spectrality property, it is enough to check $c_2(L(p,1),\lambda_0) > \mathcal{A}_{min}(\lambda_0)$ since $\lambda_0$ is a Zoll contact form. For this, we first note that given $R>0$, we have
    \begin{equation}\label{c2lowbound}
        c_2(L(p,1),\lambda_0) \geq \sup_{\substack{J \in \mathcal{J}^\R(\lambda) \\ v_1,v_2 \in [-R,0] \times L(p,1)}} \inf_{u \in \mathcal{M}^J(\R \times L(p,1); v_1,v_2)}\mathcal{E}_+(u),
        \end{equation}
    where $\mathcal{J}^\R(\lambda)$ denotes the set of symplectization admissible almost complex structures in $\R \times L(p,1)$. In particular, $J \in \mathcal{J}^\R(\lambda)$ is $\R$-invariant. The inequality \eqref{c2lowbound} holds since the completion $\overline{[-R,0] \times L(p,1)}$ coincides with the symplectization $\R \times L(p,1)$ and in the definition of $c_2$ we consider more general almost complex structures rather than just the symplectization admissible ones, see Definition \ref{def:ck} and Remark \ref{rmkbourg}.
    
    Let $J \in \mathcal{J}^\R(\lambda)$. Note that by Lemma \ref{existsasection}, there exists a meromorphic section $s \colon S^2 \to L$ with multiplicity of poles given by $k>p+1 > 2$ and such that $s(x_i) = v_1$ for $i=1,2$, whenever $v_1,v_2 \in L$ are two points in different fibers. Therefore, there exists a $J$-holomorphic curve $u \colon S^2 \setminus \Gamma \to \R \times L(p,1)$ with positive ends covering Reeb orbits of $\lambda_0$ with total multiplicity $k> 2$ passing through $v_1,v_2 \in [-R,0] \times L(p,1)$ for $v_1 = (s_1,y_1)$ and $v_2 = (s_2,y_2)$, where $y_1,y_2 \in L(p,1)$ are points lying in different Reeb orbits.
    
    To conclude that $c_2(L(p,1),\lambda_0) > \mathcal{A}_{min}(\lambda_0)$ it is enough to prove that there exist an almost complex structure  $J \in \mathcal{J}^\R(\lambda)$ and points $v_1,v_2 \in [-R,0] \times L(p,1)$ such that there is no $J$-holomorphic curve $u \colon \Sigma \setminus \{x\} \to \R \times L(p,1)$ with a unique positive end converging to a simple Reeb orbit of $\lambda_0$. We claim that this holds for any $J \in \mathcal{J}^\R(\lambda)$.\\
    \textbf{Case $g>0$:} Suppose that there exists such a $J$-holomorphic curve with $\Sigma = \Sigma_g$ being a compact Riemann surface with positive genus $g$. Following the discussion in Section \ref{sec:meromorphic}, we have the induced meromorphic map $\overline{u} \colon \Sigma_g \to L$ with exactly one simple pole and no zeros. In this case, the composition $f=\pi \circ \overline{u} \colon \Sigma \to S^2$ is a holomorphic map.
    $$\begin{tikzcd}
\Sigma \arrow{r}{\overline{u}} \arrow{rd}{f} &L \arrow{d}{\pi} \\ &S^2
\end{tikzcd}$$
    As such, $f$ must be constant or surjective with degree $\deg(f)>1$ since $g>0$. If $f$ is constant, $u$ would be a trivial cylinder ``$\R \times \gamma$'' and this contradicts $g>0$. Therefore, $f$ has degree at least $2$. In particular, we have the following diagram
     $$\begin{tikzcd}
f^*L \arrow{d} &L \arrow{d}{\pi} \\ \Sigma \arrow{r}{f} &S^2,
\end{tikzcd}$$
where $f^*L = \{(x,y) \in \Sigma \times L \mid f(x) = \pi(y)\} \to \Sigma$ given by $(x,y) \mapsto x,$ is a holomorphic line bundle with degree $\deg(f) \deg(\pi)$. Since $\pi \colon L \to S^2$ is a holomorphic line bundle with degree $-p$, we have $\deg(f^*L) = -p \deg(f) \leq -2p$. Lastly, if $u$ had just one positive end converging to a simple Reeb orbit, the map $x \mapsto (x,\overline{u}(x))$ would be a meromorphic section of $f^*L \to \Sigma$ with exactly one simple pole. Since $\deg(f^*L) \leq -2p$ and $p>1$, this contradicts Lemma \ref{numberofpoles}. \\
\textbf{Case $g=0$:} Suppose that there exists a $J$-holomorphic curve $u \colon S^2 \setminus \{x\} \to \R \times L(p,1)$ with a unique positive end converging to a simple Reeb orbit $\gamma$ of $\lambda_0$. Recall that the Reeb orbits of $\lambda_0$ are projections of the Hopf fibers and, hence, given two simple Reeb orbits $\gamma_1$ and $\gamma_2$ in $(L(p,1),\lambda_0)$, we have $sl(\gamma_i^p) = -p, i=1,2,$ and $lk(\gamma_1^p, \gamma_2^p) = p$. Given a degree $p$ holomorphic map $\phi \colon S^2 \to S^2$, the composition $u \circ \phi \colon S^2 \setminus \phi^{-1}(x) \to \R \times L(p,1)$ is a $J$-holomorphic curve with positive punctures converging with total multiplicity to a $p$-cover of $\gamma$. 

By the definition of linking numbers, the curve $u \circ \phi$ intersects each fiber of the holomorphic line bundle $\pi \colon L^* \to S^2$ $p$ times transversely except from the fibers over the asymptotic Reeb orbit $\gamma$. In this case, the curve $J$-holomorphic curve $u \colon S^2\setminus \{x\} \to \R \times L(p,1)$ intersects each such a fiber once and transversely and thus, by Lemma \ref{oncesection}, $u$ comes from a meromorphic section $s_u \colon S^2 \to L$ with exactly a simple pole. This contradicts Lemma \ref{numberofpoles} since we have $\deg(L) = -p$ with $p>1$.
\end{proof}

\bibliographystyle{alpha}
\bibliography{mybibliography}
\end{document}